\newtheorem{theorem}{Theorem}[section]
\newtheorem{lemma}[theorem]{Lemma}
\newtheorem{propo}[theorem]{Proposition}
\newtheorem{remark}[theorem]{Remark}
\newcommand\F{\mathbf{F}}
\newcommand\summ{\textstyle\sum\limits}
\newcommand\R{\mathbb{R}}
\newcommand\N{\mathbb{N}}
\DeclareMathOperator{\nr}{\mathcal N}
\DeclareMathOperator{\ra}{\mathcal R}
\newcommand\norm[1]{\|#1\|}
\newcommand\bracket[1]{\langle#1\rangle}
\newcommand\set[1]{\{#1\}}
\begin{document}

\title{On Levenberg-Marquardt-Kaczmarz iterative methods for solving
systems of nonlinear ill-posed equations} \setcounter{footnote}{1}

\author{
J.~Baumeister%
\thanks{Fachbereich Mathematik, Johann Wolfgang Goethe Universit\"at,
       Robert--Mayer--Str. 6--10, 60054 Frankfurt am Main, Germany
        \href{mailto:baumeist@math.uni-frankfurt.de}{\tt baumeist@math.uni-frankfurt.de}.}
\and
B.~Kaltenbacher%
\thanks{Universit\"at Stuttgart, Fachbereich Mathematik, Institut f\"ur
       Stochastik und Anwendungen, Pfaffenwaldring 57, 70569 Stuttgart, Germany
        \href{mailto:Barbara.Kaltenbacher@mathematik.uni-stuttgart.de}
        {\tt Barbara.Kaltenbacher@mathematik.uni-stuttgart.de}.}
\and
A.~Leit\~ao%
\thanks{Department of Mathematics, Federal University of St. Catarina,
        P.O. Box 476, 88040-900 Florian\'opolis, Brazil
        \href{mailto:aleitao@mtm.ufsc.br}{\tt aleitao@mtm.ufsc.br}.} }
\date{\small \today}

\maketitle

\begin{abstract}
In this article a modified Levenberg-Marquardt method coupled with a Kaczmarz
strategy for obtaining stable solutions of nonlinear systems of ill-posed
operator equations is investigated. We show that the proposed method is a
convergent regularization method. Numerical tests are presented for a non-linear
inverse doping problem based on a bipolar model.
\end{abstract}

\noindent {\small {\bf Keywords.} Nonlinear systems; Ill-posed
equations; Regularization; Levenberg-Marquardt method; Kaczmarz method.}
\medskip

\noindent {\small {\bf AMS Classification:} 65J20, 47J06.}

% --------------------------------------------------------------------
\section{Introduction} \label{sec:intro}

In this paper we propose a new method for obtaining regularized
approximations of systems of nonlinear ill-posed operator equations.

The \textit{inverse problem} we are interested in consists of
determining an unknown quantity $x \in X$ from the set of
data $(y_0, \dots,  y_{N-1}) \in Y^N$, where $X$, $Y$ are Hilbert
spaces and $N \geq 1$ (the case $y_i \in Y_i$ with possibly different
spaces $Y_0,\ldots,Y_{N-1}$ can be treated analogously).
In practical situations, we do not know the data exactly. Instead,
we have only approximate measured data $y_i^\delta \in Y$ satisfying
\begin{equation}\label{eq:noisy-i}
    \norm{ y_i^\delta - y_i } \le \delta_i \, , \ \ i = 0, \dots, N-1 \, ,
\end{equation}
with $\delta_i > 0$ (noise level). We use the notation $\delta :=
(\delta_0, \dots, \delta_{N-1})$. The finite set of data above is obtained
by indirect measurements of the parameter $x$, this process being described by
the model
\begin{equation}\label{eq:inv-probl}
    F_{i}(x)  =  y_{i} \, , \ \ i = 0, \dots, N-1 \, ,
\end{equation}
where $F_i: D_i \subset X \to Y$, and $D_i$ are the corresponding
domains of definition.

Standard methods for the solution of system \eqref{eq:inv-probl} are
based in the use of \textit{Iterative type} regularization methods
\cite{BakKok04, EngHanNeu96, HanNeuSch95, KalNeuSch08, Lan51} or
\textit{Tikhonov type} regularization methods \cite{EngHanNeu96,
Mor93, SeiVog89, Tik63b, TikArs77, Sch93a} after rewriting
(\ref{eq:inv-probl}) as a single equation $\F(x) = y$, where
\begin{align} \label{eq:single-op}
\F \ : \ = ( F_0, \dots, F_{N-1} ): \bigcap\nolimits_{i=0}^{N-1} D_i \to Y^N
\end{align}
and $y := (y_0, \dots,  y_{N-1})$.

The starting point of our approach is the Levenberg-Marquardt method
\cite{Ha97, Le44, Ma63, DES98} for solving ill-posed problems, which is defined by
$$
  x_{k+1}^\delta \ = \ x_k^\delta - ( \F'(x_k^\delta)^* \F'(x_k^\delta)
  +  \alpha I)^{-1} \F'(x_k^\delta)^* ( \F(x_k^\delta) -  y^\delta ) \, .
$$
where $\F'(z)$ is the Frechet-derivative of $\F$ in $z$ and $\F'(z)^*$ is its
adjoint.
Motivated by the ideas in \cite{HLS07,CHLS08, HLR09, BK06}, we propose
in this article a \textit{loping Levenberg-Marquardt-Kaczmarz method}
(\textsc{l-LMK} method) for solving (\ref{eq:inv-probl}). This
iterative method is defined by
\begin{equation}  \label{eq:lmk}
x_{k+1}^\delta \ = \ x_{k}^\delta + \omega_k \, h_k \, ,
\end{equation}
where
\begin{equation} \label{def:hk}
h_k \ := \ ( F_{[k]}'(x_k^\delta)^* F_{[k]}'(x_k^\delta) + \alpha I)^{-1}
          F_{[k]}'(x_k^\delta)^* ( y_{[k]}^\delta - F_{[k]}(x_k^\delta) )
\end{equation}
and
\begin{equation} \label{def:omk}
\omega_k \ := \
  \begin{cases}
    1  & {\rm if}\ \norm{F_{[k]}(x_{k}^\delta) - y_{[k]}^\delta} \geq \tau \delta_{[k]} \\
    0  & \text{otherwise}
  \end{cases} \, .
\end{equation}
Here $\alpha > 0$ is an appropriately chosen number (see \eqref{def:alp-tau}
below), $[k] := (k \mod N) \in \set{0, \dots, N-1}$, and $x_0^\delta = x_0
\in X$ is an initial guess, possibly incorporating some \textit{a priori}
knowledge about the exact solution, and $\tau>1$ a fixed constant
(see \eqref{def:alp-tau} below).

The \textsc{l-LMK} method consists in incorporating the Kaczmarz
strategy into the Levenberg-Marquardt method. This procedure is analog
to the one introduced in \cite{HLS07}, \cite{CHLS08}, \cite{HLR09}, and
\cite{BK06} regarding the Landweber-Kaczmarz (\textsc{LK}) iteration,
the Steepest-Descent-Kaczmarz (\textsc{SDK}) iteration, the
Expectation-Maximization-Kaczmarz (\textsc{EMK}) iteration, and the
Iteratively Regularized Gauss-Newton-Kaczmarz (\textsc{IRGNK}) respectively.
As usual in Kaczmarz type algorithms, a group of $N$ subsequent steps
(starting at some multiple $k$ of $N$) is called a {\em cycle}. The
\textsc{l-LMK} iteration should be terminated when, for the first time, all
$x_k^\delta$ are equal within a cycle. That is, we stop the iteration at
\begin{equation} \label{def:discrep-lmk}
  k_*^\delta \ := \ \min \{ lN \in \N: \, x_{lN}^\delta =
                 x_{lN+1}^\delta = \cdots = x_{lN+N}^\delta \} \, ,
\end{equation}
Notice that $k_*^\delta$ is the smallest multiple of $N$ such that
\begin{equation} \label{def:discr-lmk2}
x_{k_*^\delta}^\delta = x_{k_*^\delta+1}^\delta = \dots = x_{k_*^\delta+N}^\delta \, ,
\end{equation}
or equivalently (see Proposition \ref{prop:st-fin-lmk} below) such that
$$
\omega_{k_*^\delta-1} = \omega_{k_*^\delta} = \dots = \omega_{k_*^\delta+N-1} = 0 \, .
$$
For exact data ($\delta = 0$) we have $\omega_k = 1$ for each $k$ and the
\textsc{l-LMK} iteration reduces to the Levenberg-Marquardt-Kaczmarz
(\textsc{LMK}) method. For noisy data however, the \textsc{l-LMK} method is
fundamentally different from the \textsc{LMK} method: The bang-bang
relaxation parameter $\omega_k$ effects that the iterates defined in
\eqref{eq:lmk}, \eqref{def:hk} become stationary if all components of the
residual vector  $\norm{ F_i(x_k^\delta) - y_i^{\delta} }$  fall below a
pre-specified threshold.
This characteristic renders \eqref{eq:lmk}, \eqref{def:hk} a regularization
method in the sense of \cite{EngHanNeu96} (see Section~\ref{sec:conv-noise}).
\medskip

The article is outlined as follows.
In Section \ref{sec:basic} we formulate basic assumptions and derive
some auxiliary estimates required for the analysis.
In Section \ref{sec:conv-exact} we prove a convergence result for the
\textsc{LMK} method.
In Section \ref{sec:conv-noise} we prove a semiconvergence result for
the \textsc{l-LMK} method.
In Section~\ref{sec:numeric} a numerical experiment for an inverse doping
problem is presented.
Section~\ref{sec:conclusion} is devoted to final remarks and conclusions.

% --------------------------------------------------------------------
\section{Assumptions and basic results} \label{sec:basic}

We begin this section by introducing some assumptions, that are needed
for the convergence analysis presented in the next sections. These
assumptions derive from the classical assumptions used in the analysis of
iterative regularization methods \cite{EngHanNeu96, KalNeuSch08, Sch93a}.
\medskip

\noindent (A1) \
The operators $F_i$ and their linearizations $F_i'$ -- see (A2) -- are continuous and
the corresponding domains of definition $D_i$ have nonempty interior, i.e., there exists
$x_0 \in X$, $\rho>0$ such that $B_\rho(x_0)\subset \bigcap\nolimits_{i=0}^{N-1} D_i$, where $B_\rho(x_0)$ is the ball of radius $\rho$ around $x_0$.
%
% The operators $F_i$ are weakly sequentially continuously and Fr\'echet
% differentiable and the corresponding domains of definition $D_i$ are
% weakly closed.
Moreover, we assume the existence of $C > 0$ such that
\begin{equation} \label{eq:a-dfb}
\| F_i'(x) \| \ \le \ C \, , \quad \ x \in B_\rho(x_0)
\end{equation}
(notice that $x_0^\delta =x_0$ is used as starting point of the \textsc{l-LMK} iteration).
\medskip

\noindent (A2) \
We assume that the {\em local tangential cone condition} \cite{EngHanNeu96, KalNeuSch08}
\begin{equation} \label{eq:a-tcc}
\| F_i(\bar{x}) - F_i(x) -  F_i'(x)( \bar{x} - x ) \|_Y \ \leq \
   \eta \norm{ F_i(\bar{x}) - F_i(x) }_{Y} \, , \qquad
   \forall \ x, \bar{x} \in B_{\rho}(x_0)
\end{equation}
holds for some $\eta < 1$. This is a uniform assumption on the nonlinearity
of the operators $F_i$.
Note that $F_i'(x)$ need not necessarily be the Fr\'echet derivative of
$F_i$ at $x$, but it should be a bounded linear operator that continuously
depends on $x$, see (A1).
\medskip

\noindent (A3) \
There exists an element $x^* \in B_{\rho/2}(x_0)$ such that $\F(x^*) = y$,
where $y = (y_0, \dots,  y_{N-1})$ are the exact data satisfying
\eqref{eq:noisy-i}.
\medskip

We are now in position to choose the positive constants $\alpha$
and $\tau$ in \eqref{def:hk}, \eqref{def:omk}. For the rest of
this article we shall assume
\begin{equation} \label{def:alp-tau}
\alpha \ > \ \frac{C^2 q}{1 - q} \, ,
\qquad
\tau \ > \ \frac{1+\eta}{1-\eta} > 1 \, ,
\qquad
\eta + (1+\eta) \tau^{-1} < q < 1\, 
\end{equation}
for some $0 < q < 1$.

In the sequel we verify some basic facts that are helpful for
the convergence analysis derived in the next two sections. The
first result concerns some useful identities.

\begin{lemma} \label{lem:ident}
Let $x_k^\delta$, $h_k$ and $\alpha$ be defined by \eqref{eq:lmk},
\eqref{def:hk} and \eqref{def:alp-tau} respectively. Moreover,
assume that (A1) - (A3) hold true.

\noindent a) \ For all $k \in \N$ we have
$$
y_{[k]}^\delta - F_{[k]}(x_k^\delta) - F_{[k]}'(x_k^\delta) h_k
\ = \
\alpha ( F_{[k]}'(x_k^\delta) F_{[k]}'(x_k^\delta)^* + \alpha I)^{-1}
       ( y_{[k]}^\delta - F_{[k]}(x_k^\delta) ) \, .
$$
b) \ Moreover, if $\omega_k = 1$, we have
$$
h_k \ = \ -\alpha^{-1} F_{[k]}'(x_k^\delta)^* \big[
        F_{[k]}'(x_k^\delta) ( x_{k+1}^\delta - x_k^\delta )
        + F_{[k]}(x_k^\delta) - y_{[k]}^\delta \big] \, ,
$$
$$
F_{[k]}'(x_k^\delta) ( x_{k+1}^\delta - x_k^\delta )
                     + F_{[k]}(x_k^\delta) - y_{[k]}^\delta \ = \
\alpha ( F_{[k]}'(x_k^\delta) F_{[k]}'(x_k^\delta)^* + \alpha I)^{-1}
                     ( F_{[k]}(x_k^\delta) - y_{[k]}^\delta ) \, .
$$
c) \ Define \ $B_k^\delta :=
\alpha ( A_k A_k^* + \alpha I)^{-1} ( F_{[k]}(x_k^\delta) - y_{[k]}^\delta )
= F_{[k]}'(x_k^\delta) ( x_{k+1}^\delta - x_k^\delta ) +
F_{[k]}(x_k^\delta) - y_{[k]}^\delta$ (we write $B_k=B_k^\delta$ for $\delta=0$).
Then
\begin{equation} \label{eq:monot-auxB}
q \, \norm{ F_{[k]}(x_k^\delta) - y_{[k]}^\delta } \ \le \
\norm{ B_k^\delta } \ \le \
\norm{ F_{[k]}(x_k^\delta) - y_{[k]}^\delta } \, .
\end{equation}
\end{lemma}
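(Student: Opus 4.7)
I will abbreviate $A_k := F_{[k]}'(x_k^\delta)$ and $r_k := y_{[k]}^\delta - F_{[k]}(x_k^\delta)$, so that \eqref{def:hk} reads $h_k = (A_k^* A_k + \alpha I)^{-1} A_k^* r_k$ and the operator of part (c) becomes $B_k^\delta = -\alpha(A_k A_k^* + \alpha I)^{-1} r_k$. The workhorse identity throughout is the standard Tikhonov commutation $A_k(A_k^* A_k + \alpha I)^{-1} = (A_k A_k^* + \alpha I)^{-1} A_k$, which, together with the functional calculus of the positive self-adjoint operator $A_k A_k^*$, does essentially all the algebraic work.

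\textbf{Parts (a) and (b).} For (a), applying the commutation identity to $h_k$ gives $A_k h_k = (A_k A_k^* + \alpha I)^{-1} A_k A_k^* r_k$; writing $I - (A_k A_k^* + \alpha I)^{-1} A_k A_k^* = \alpha(A_k A_k^* + \alpha I)^{-1}$ then produces the stated formula for $r_k - A_k h_k$. For (b), the assumption $\omega_k = 1$ together with \eqref{eq:lmk} forces $x_{k+1}^\delta - x_k^\delta = h_k$, so the second identity of (b) is simply (a) with the overall sign flipped and $A_k h_k$ rewritten as $A_k(x_{k+1}^\delta - x_k^\delta)$. The first identity of (b) is obtained by rearranging the defining linear equation $(A_k^* A_k + \alpha I) h_k = A_k^* r_k$ into $\alpha h_k = -A_k^*(A_k h_k - r_k)$, then substituting $A_k h_k = A_k(x_{k+1}^\delta - x_k^\delta)$ and $-r_k = F_{[k]}(x_k^\delta) - y_{[k]}^\delta$.

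\textbf{Part (c) and the main subtlety.} The upper bound $\norm{B_k^\delta} \le \norm{F_{[k]}(x_k^\delta) - y_{[k]}^\delta}$ is immediate from $\norm{\alpha(A_k A_k^* + \alpha I)^{-1}} \le 1$, which in turn follows from $\alpha/(s+\alpha) \le 1$ on $[0,\infty)$ via the spectral theorem applied to $A_k A_k^*$. The lower bound is the one non-trivial ingredient, and this is precisely where the calibration $\alpha > C^2 q/(1-q)$ in \eqref{def:alp-tau} is consumed. Assumption (A1) gives $\norm{A_k}^2 \le C^2$, so the spectrum of $A_k A_k^*$ lies in $[0, C^2]$; on this spectrum the function $s \mapsto \alpha/(s+\alpha)$ is bounded below by $\alpha/(C^2+\alpha)$, yielding the operator inequality $\alpha(A_k A_k^* + \alpha I)^{-1} \ge (\alpha/(C^2+\alpha)) I$ and hence $\norm{B_k^\delta} \ge (\alpha/(C^2+\alpha))\norm{r_k}$. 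A short algebraic rearrangement shows $\alpha > C^2 q/(1-q) \Leftrightarrow \alpha/(C^2+\alpha) > q$, which closes the argument. The only point of finesse, though technically routine, is to recognize that the lower-bound constraint on $\alpha$ in \eqref{def:alp-tau} is tuned exactly so that this final inequality holds with the same constant $q$ that will drive the monotonicity and convergence estimates in the later sections.
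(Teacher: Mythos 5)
Your proof is correct and follows essentially the same route as the paper: parts (a) and (b) are the standard Tikhonov commutation/rearrangement identities (which the paper omits as straightforward), and for (c) the paper uses exactly your spectral bound $\norm{B_k^\delta}\ge \frac{\alpha}{C^2+\alpha}\norm{F_{[k]}(x_k^\delta)-y_{[k]}^\delta}\ge q\,\norm{F_{[k]}(x_k^\delta)-y_{[k]}^\delta}$ together with $\norm{\alpha(A_kA_k^*+\alpha I)^{-1}}\le 1$ for the upper bound.
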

\begin{proof}
The proof of a) and b) is straightforward and will be omitted.
To prove c) notice that
$$
%  \norm{ B_k^\delta } =
   \norm{ \alpha ( A_k A_k^* + \alpha I)^{-1}
   ( F_{[k]}(x_k^\delta) - y_{[k]}^\delta ) }
\geq
  \frac{\alpha}{C^2 + \alpha}
  \norm{ F_{[k]}(x_k^\delta) - y_{[k]}^\delta }
\geq q \, \norm{ F_{[k]}(x_k^\delta) - y_{[k]}^\delta } \, 
$$
with $A_k := F_{[k]}'(x_k^\delta)$. On the other hand, we have
$\norm{ \alpha ( A_k A_k^* + \alpha I)^{-1}
( F_{[k]}(x_k^\delta) - y_{[k]}^\delta ) }
\leq \norm{ F_{[k]}(x_k^\delta) - y_{[k]}^\delta }$.
\end{proof}

\begin{remark}
According to \cite{Ha97} the Levenberg-Marquardt-iteration should be
implemented with variable $\alpha_k$, which for the \textsc{LMK} would
mean that $\alpha_k$ is chosen in such a way that $h_k = h_k(\alpha)$ in
\eqref{def:hk} satisfies
\begin{equation}  \label{alphakHanke}
\norm{ F_{[k]}'(x_k^\delta) h_k + F_{[k]}(x_k^\delta) - y_k^\delta }
\ = \ q \, \norm{ F_{[k]}(x_k^\delta) - y_k^\delta } \, ,
\end{equation}
for some $0 < q < 1$. From \eqref{eq:monot-auxB} and monotonicity of the
mapping $\alpha\mapsto \norm{ B_k^\delta }$, (see, e.g., \cite{EngHanNeu96}),
it follows that $\alpha$ as chosen in \eqref{def:alp-tau} is larger than the
$\alpha_k$'s defined in \cite{Ha97} (see \cite[Theorem~3.3.1]{Gr84}).

It is worth noticing that Lemma~\ref{lem:monot-aux} as well as 
Proposition~\ref{prop:monot} remain valid with $\alpha_k$ chosen
as in \eqref{alphakHanke}.

\end{remark}

The following lemma is an important auxiliary result, which will be used
to prove a monotonicity property of the \textsc{l-LMK} iteration.

\begin{lemma} \label{lem:monot-aux}
Let $x_k^\delta$, $h_k$, $\alpha$ and $q$ be defined by \eqref{eq:lmk},
\eqref{def:hk} and \eqref{def:alp-tau} respectively. Moreover, assume that
(A1) - (A3) hold true. If $x_k^\delta \in B_\rho(x_0)$ for some $k \in \N$, then
\begin{multline} \label{eq:monot-aux}
\| x_{k+1}^\delta  - x^* \|^2 - \| x_k^\delta - x^* \|^2  \le
   2 \frac{\omega_k}{\alpha} \norm{ B_k^\delta } \,
   \big[ (\eta-q) \norm{ F_{[k]}(x_k^\delta) - y_{[k]}^\delta }
         + (1+\eta) \delta_{[k]} \big] \\
    - \norm{ x_{k+1}^\delta - x_k^\delta}^2 \, ,
\end{multline}
where $B_k^\delta$ is defined as in Lemma~\ref{lem:ident}.
\end{lemma}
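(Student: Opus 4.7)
The case $\omega_k = 0$ is trivial: then $x_{k+1}^\delta = x_k^\delta$ by \eqref{eq:lmk}, so both sides of \eqref{eq:monot-aux} vanish. Henceforth I would assume $\omega_k = 1$, so that $x_{k+1}^\delta - x_k^\delta = h_k$. Abbreviating $A_k := F_{[k]}'(x_k^\delta)$ and $e_k := F_{[k]}(x_k^\delta) - y_{[k]}^\delta$, the starting point is the expansion
\[
\norm{x_{k+1}^\delta - x^*}^2 - \norm{x_k^\delta - x^*}^2 = 2\,\langle h_k,\, x_k^\delta - x^* \rangle + \norm{h_k}^2,
\]
so the work reduces to bounding the inner product and rewriting $\norm{h_k}^2$ in terms of $B_k^\delta$.

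For the inner product, Lemma~\ref{lem:ident}(b) gives $h_k = -\alpha^{-1} A_k^* B_k^\delta$, hence
\[
\langle h_k,\, x_k^\delta - x^* \rangle = -\alpha^{-1} \langle B_k^\delta,\, A_k(x_k^\delta - x^*) \rangle.
\]
Using (A3) ($F_{[k]}(x^*) = y_{[k]}$) and \eqref{eq:noisy-i}, I would decompose
\[
A_k(x_k^\delta - x^*) = e_k + (y_{[k]}^\delta - y_{[k]}) - \bigl[F_{[k]}(x^*) - F_{[k]}(x_k^\delta) - A_k(x^* - x_k^\delta)\bigr].
\]
The tangential cone condition (A2), combined with $\norm{F_{[k]}(x_k^\delta) - y_{[k]}} \le \norm{e_k} + \delta_{[k]}$, bounds the bracketed term by $\eta(\norm{e_k} + \delta_{[k]})$. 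Cauchy--Schwarz on this piece and on the noise term then yields
\[
\langle B_k^\delta,\, A_k(x_k^\delta - x^*) \rangle \;\geq\; \langle B_k^\delta,\, e_k \rangle - \norm{B_k^\delta}\bigl[\eta \norm{e_k} + (1+\eta)\delta_{[k]}\bigr].
\]

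For $\norm{h_k}^2$, I would write $\norm{h_k}^2 = \alpha^{-2}\langle A_k A_k^* B_k^\delta,\, B_k^\delta\rangle$ and use the defining relation $(A_kA_k^* + \alpha I)B_k^\delta = \alpha e_k$ from Lemma~\ref{lem:ident}(c) to obtain the key identity
\[
\alpha \norm{h_k}^2 = \langle e_k,\, B_k^\delta \rangle - \norm{B_k^\delta}^2.
\]
Plugging both estimates into the initial expansion, the two contributions of $\langle e_k, B_k^\delta\rangle$ cancel, leaving
\[
\norm{x_{k+1}^\delta - x^*}^2 - \norm{x_k^\delta - x^*}^2 \;\leq\; \tfrac{2}{\alpha}\norm{B_k^\delta}\bigl[\eta \norm{e_k} + (1+\eta)\delta_{[k]}\bigr] - \tfrac{2}{\alpha}\norm{B_k^\delta}^2 - \norm{h_k}^2.
\]
The closing step is Lemma~\ref{lem:ident}(c) in the sharp form $\norm{B_k^\delta} \geq q\norm{e_k}$, which upgrades $-\tfrac{2}{\alpha}\norm{B_k^\delta}^2$ to $-\tfrac{2q}{\alpha}\norm{B_k^\delta}\norm{e_k}$ and produces \eqref{eq:monot-aux}. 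The main obstacle is really careful bookkeeping: the algebra must be arranged so that after cancellation the leftover $-\norm{B_k^\delta}^2$ is exactly what Lemma~\ref{lem:ident}(c) needs to absorb in order to convert $\eta$ into $\eta - q$ in the final coefficient of $\norm{e_k}$.
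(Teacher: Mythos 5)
Your proof is correct and uses essentially the same ingredients as the paper's argument (Lemma~\ref{lem:ident}\,b) to pass to an inner product against $B_k^\delta$, the tangential cone condition plus the noise bound, and the lower bound $\norm{B_k^\delta}\ge q\norm{F_{[k]}(x_k^\delta)-y_{[k]}^\delta}$ from Lemma~\ref{lem:ident}\,c)); the only cosmetic difference is that you expand the difference of squares around $x_k^\delta$ and recover the $-\norm{B_k^\delta}^2$ term through the identity $\alpha\norm{h_k}^2=\langle e_k,B_k^\delta\rangle-\norm{B_k^\delta}^2$, whereas the paper expands around $x_{k+1}^\delta$ so that this term appears directly. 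One harmless slip: the sign in front of the bracketed linearization error in your decomposition of $A_k(x_k^\delta-x^*)$ should be $+$, not $-$; since that term is only ever estimated in norm via Cauchy--Schwarz, the final inequality is unaffected.
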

\begin{proof}
Let $A_k := F'_{[k]}(x_k^\delta)$. If $\omega_k = 0$,
\eqref{eq:monot-aux} is obvious. 
If $\omega_k = 1$, it follows from \eqref{eq:lmk}, \eqref{def:hk}
and Lemma~\ref{lem:ident} that
\begin{align*}
\norm{ & x_{k+1}^\delta - x^*}^2 - \norm{x_k^\delta - x^*}^2
\\
&\quad = 2\, \langle x_{k+1}^\delta - x_k^\delta , \ x_{k+1}^\delta - x^* \rangle
         - \norm{ x_{k+1}^\delta - x_k^\delta}^2
\\
&\quad = -2 \alpha^{-1}\, \langle A_k ( x_{k+1}^\delta - x_k^\delta )
           + F_{[k]}(x_k^\delta) - y_{[k]}^\delta , \
           A_k ( x_{k+1}^\delta - x^* ) \pm A_k x_k^\delta
           \pm F_{[k]}(x_k^\delta) \pm  y_{[k]}^\delta \rangle \\
&\quad\quad - \norm{ x_{k+1}^\delta - x_k^\delta}^2
\\
& \quad = -2 \alpha^{-1}\, \big[ \norm{ B_k^\delta }^2 +
          \langle B_k^\delta , \ A_k ( x_k^\delta - x^* )
          - F_{[k]}(x_k^\delta) + y_{[k]}^\delta \pm F_{[k]}(x^*) \rangle
          - \norm{ x_{k+1}^\delta - x_k^\delta}^2
\\
& \quad = 2 \alpha^{-1}\, \big[ - \norm{ B_k^\delta }^2 +
          \langle B_k^\delta , \ - F_{[k]}(x^*) + F_{[k]}(x_k^\delta)
          + A_k ( x^* - x_k^\delta ) \rangle +
          \langle B_k^\delta , \ F_{[k]}(x^*) - y_{[k]}^\delta \rangle \\
&\quad\quad - \norm{ x_{k+1}^\delta - x_k^\delta}^2 \, .
\end{align*}
Now, applying the Cauchy-Schwarz inequality, \eqref{eq:noisy-i}
and \eqref{eq:a-tcc} with $\bar x = x^* \in B_{\rho/2}(x_0)$,
$x = x_k^\delta \in B_\rho(x_0)$, leads to
\begin{align} \label{eq:monot-auxA}
\norm{ & x_{k+1}^\delta - x^*}^2 - \norm{x_k^\delta - x^*}^2
\nonumber \\
&\quad \le 2 \alpha^{-1}\, \big[ - \norm{ B_k^\delta }^2 +
           \norm{ B_k^\delta } \ \eta \norm{ F_{[k]}(x_k^\delta) - F_{[k]}(x^*)
           \pm y_{[k]}^\delta } + \norm{ B_k^\delta } \delta_{[k]} \big] \nonumber
           - \norm{ x_{k+1}^\delta - x_k^\delta}^2
\\
&\quad \le 2 \alpha^{-1} \norm{ B_k^\delta } \, \big[ - \norm{ B_k^\delta } +
           \eta \norm{ F_{[k]}(x_k^\delta) - y_{[k]}^\delta }
           + (\eta+1) \delta_{[k]} \big]
           - \norm{ x_{k+1}^\delta - x_k^\delta}^2 \, .
\end{align}
The estimate \eqref{eq:monot-aux} follows now plugging \eqref{eq:monot-auxB}
into \eqref{eq:monot-auxA}.
\end{proof}

Our next goal is to prove a monotonicity property, known to be satisfied
by classical iterative regularization methods (e.g., Landweber
\cite{EngHanNeu96}, steepest descent \cite{Sch96}), and also by
Kaczmarz type methods (e.g., loping Landweber-Kaczmarz \cite{HLS07},
loping Steepest-Descent-Kaczmarz \cite{CHLS08}, loping Expectation-%
Maximization-Kaczmarz \cite{HLR09}).

\begin{propo}[Monotonicity] \label{prop:monot}
Under the assumptions of Lemma~\ref{lem:monot-aux}, for all $k < k_*^\delta$
the iterates $x_k^\delta$ remain in $B_{\rho/2}(x^*) \subset B_{\rho}(x_0)$
and satisfy \eqref{eq:monot-aux}. Moreover,
\begin{equation} \label{eq:lmk-monot}
\| x_{k+1}^\delta - x^* \|^2 \ \le \ \| x_k^\delta - x^* \|^2 \, ,
\quad  k < k_*^\delta \, .
\end{equation}
\end{propo}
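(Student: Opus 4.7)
The plan is to prove both assertions simultaneously by induction on $k$, running from $k = 0$ up to $k = k_*^\delta - 1$. The induction hypothesis is: $x_k^\delta \in B_{\rho/2}(x^*)$. The base case $k=0$ is immediate, since $x_0^\delta = x_0$ and by (A3) the exact solution $x^*$ lies in $B_{\rho/2}(x_0)$, so $\|x_0 - x^*\| \le \rho/2$. Since $B_{\rho/2}(x^*) \subset B_\rho(x_0)$ follows from the triangle inequality together with $x^* \in B_{\rho/2}(x_0)$, the hypothesis of Lemma~\ref{lem:monot-aux} is available and the estimate \eqref{eq:monot-aux} holds for the current $k$.

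For the inductive step I would split on the value of $\omega_k$. When $\omega_k = 0$, \eqref{eq:lmk} gives $x_{k+1}^\delta = x_k^\delta$, so \eqref{eq:lmk-monot} is trivial and $x_{k+1}^\delta$ stays in $B_{\rho/2}(x^*)$. When $\omega_k = 1$, the definition \eqref{def:omk} of $\omega_k$ yields $\delta_{[k]} \le \tau^{-1}\,\|F_{[k]}(x_k^\delta) - y_{[k]}^\delta\|$, so the bracket on the right hand side of \eqref{eq:monot-aux} is bounded by
\begin{equation*}
\bigl[(\eta - q) + (1+\eta)\tau^{-1}\bigr]\,\|F_{[k]}(x_k^\delta) - y_{[k]}^\delta\|.
\end{equation*}
The key observation is that the parameter choice \eqref{def:alp-tau} forces $\eta + (1+\eta)\tau^{-1} < q$, so the factor in brackets is strictly negative. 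Combined with $\|B_k^\delta\| \ge 0$ and the residual term, this produces
\begin{equation*}
\|x_{k+1}^\delta - x^*\|^2 - \|x_k^\delta - x^*\|^2 \le -\|x_{k+1}^\delta - x_k^\delta\|^2 \le 0,
\end{equation*}
establishing \eqref{eq:lmk-monot}.

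Monotonicity then yields $\|x_{k+1}^\delta - x^*\| \le \|x_0 - x^*\| \le \rho/2$, so $x_{k+1}^\delta \in B_{\rho/2}(x^*)$, and a further triangle inequality with $x^* \in B_{\rho/2}(x_0)$ places $x_{k+1}^\delta$ in $B_\rho(x_0)$, closing the induction. The point that requires some care — and which I expect to be the main obstacle — is making the sign of the bracket in \eqref{eq:monot-aux} come out right in the $\omega_k = 1$ case; this is where one must invoke both the discrepancy-style lower bound $\|F_{[k]}(x_k^\delta) - y_{[k]}^\delta\| \ge \tau\delta_{[k]}$ and the somewhat intricate coupling between $\alpha$, $\tau$, $\eta$, and $q$ prescribed by \eqref{def:alp-tau}. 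Note also that one need not separately handle the case where $\omega_j = 0$ for some intermediate $j < k$: the monotonicity conclusion holds uniformly, since both cases yield a non-positive increment.
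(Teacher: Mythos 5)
Your proof is correct and follows essentially the same route as the paper: induction on $k$, the trivial $\omega_k=0$ case, and in the $\omega_k=1$ case the substitution $\delta_{[k]}\le\tau^{-1}\|F_{[k]}(x_k^\delta)-y_{[k]}^\delta\|$ into \eqref{eq:monot-aux} so that the parameter condition $\eta+(1+\eta)\tau^{-1}<q$ from \eqref{def:alp-tau} makes the bracket non-positive. The paper's proof is just a terser version of exactly this argument (it spells out only the $k=0$ step and then says ``inductive argument'').
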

\begin{proof}
From (A3) it follows that $x_0 \in B_{\rho/2}(x^*)$.
If $\omega_0 = 0$, then $x_1^\delta = x_0^\delta = x_0 \in
B_{\rho/2}(x^*)$ and \eqref{eq:lmk-monot} is satisfied with
equality for $k=0$.
If $\omega_0 = 1$, it follows from Lemma~\ref{lem:monot-aux} that
\eqref{eq:monot-aux} holds for $k=0$. Then we conclude from
\eqref{def:omk} and \eqref{eq:monot-aux} that
$$
\| x_{k+1}^\delta  - x^* \|^2 - \| x_k^\delta - x^* \|^2  \le
   2 \alpha^{-1} \norm{ B_k^\delta } \norm{ F_{[k]}(x_k^\delta) - y_{[k]}^\delta }
   \big[ \eta + (1+\eta) \tau^{-1} - q \big] \, .
$$
Due to \eqref{def:alp-tau} the last term on the right hand side is
non positive. Thus, \eqref{eq:lmk-monot} holds for $k=0$.
In particular we have $x_1^\delta \in B_{\rho/2}(x^*)$. The proof follows
now using an inductive argument.
\end{proof}

In the next two sections we provide a complete convergence
analysis for the \textsc{l-LMK} iteration, showing that it is a
convergent regularization method in the sense of \cite{EngHanNeu96}.

% --------------------------------------------------------------------
\section{Convergence for exact data} \label{sec:conv-exact}

Unless otherwise stated, we assume in the sequel that (A1) - (A3) hold
true and that $x_k^\delta$, $h_k$, $\alpha$, $\tau$ and $q$ are defined
by \eqref{eq:lmk}, \eqref{def:hk} and \eqref{def:alp-tau}.
Our main goal in this section is to prove convergence in the case
$\delta_i = 0$, $i=0,\dots,N-1$.
As already observed in Section~\ref{sec:intro} the \textsc{l-LMK}
reduces in this case to the \textsc{LMK} iteration (i.e. $\omega_k = 1$
in \eqref{def:omk}).
For exact data $y = (y_0, \dots, y_{N-1})$, the iterates in \eqref{eq:lmk}
are denoted by $x_k$, in contrast to $x_k^\delta$ in the noisy data case.

%JB1305
%  Die folgende Bemerkung sollte besser erst vor dem nächsten Theorem
% positioniert werden. Hier ist sie ohne Beziehung.
%JB1305

\begin{remark} \label{rem:mns}
It is worth noticing that there exists an $x_0$-minimal norm solution of
\eqref{eq:inv-probl} in $B_{\rho/2}(x_0)$, i.e., a solution $x^\dag$ of
\eqref{eq:inv-probl} such that $\norm{ x^\dag - x_0 } =
\inf \{ \norm{x - x_0} : x \in B_{\rho/2}(x_0)$ and $\F(x) = y \}$.
Moreover, $x^\dag $ is the only solution of \eqref{eq:inv-probl} in
$B_{\rho/2}(x_0) \cap \big( x_0 + \ker(F '( x^\dag ))^\perp \big) $.
This assertion is a direct consequence of \cite[Proposition~2.1]{HanNeuSch95}.
For a detailed proof we refer the reader to \cite{KalNeuSch08}.
\end{remark}

In the sequel we derive some estimates that are helpful for the proof of
the convergence result. From Proposition~\ref{prop:monot} it follows that
\eqref{eq:monot-aux} holds for all $k \in \N$. Since the data is exact,
\eqref{eq:monot-aux} can be rewritten as
\begin{equation} \label{eq:bound-ser-aux}
\| x_{k+1} - x^* \|^2 - \| x_k - x^* \|^2 \ \le \
2 \alpha^{-1} (\eta-q) \norm{ B_k } \, \norm{ F_{[k]}(x_k) - y_{[k]}}
- \norm{ x_{k+1} - x_k}^2 \, .
\end{equation}
Now inserting \eqref{eq:monot-auxB} into \eqref{eq:bound-ser-aux}
and summing over all $k$, leads to
\begin{subequations}
\begin{equation} \label{eq:bound-ser1}
\summ_{k=0}^\infty \norm{ F_{[k]}(x_k) - y_{[k]} }^2 \ \leq \
      \alpha [2q (q-\eta)]^{-1} \norm{x_0 - x^*}^2 \ < \ \infty
\end{equation}
(notice that $q > \eta + (1+\eta) \tau^{-1} > \eta$), and also to
\begin{equation} \label{eq:bound-ser4}
\summ_{k=0}^\infty \norm{ B_k }^2 \ \leq \
      \alpha [2 (q-\eta)]^{-1} \norm{x_0 - x^*}^2 \ < \ \infty \, .
\end{equation}
On the other hand, neglecting the last term on the right hand side
of \eqref{eq:bound-ser-aux} and summing over all $k$, leads to
\begin{equation} \label{eq:bound-ser2}
\summ_{k=0}^\infty \norm{ B_k } \, \norm{ F_{[k]}(x_k) - y_{[k]} } \ \leq \
      \alpha [2 (q-\eta)]^{-1} \norm{x_0 - x^*}^2 \ < \ \infty \, .
\end{equation}
Finally, neglecting the first term on the right hand side of
\eqref{eq:bound-ser-aux} and summing over all $k$, leads to
\begin{equation} \label{eq:bound-ser3}
\summ_{k=0}^\infty \norm{ x_{k+1} - x_k}^2 \ \leq \
      \norm{x_0 - x^*}^2 \ < \ \infty \, .
\end{equation}
\end{subequations}

\begin{theorem}[Convergence for exact data] \label{th:exact}
For exact data, the iteration $x_k$ converges to a solution of
\eqref{eq:inv-probl}, as $k \to \infty$. Moreover, if the kernel
condition \cite{DES98}
\begin{equation} \label{eq:kern-cond}
\nr( \F'(x^\dag) ) \subseteq \nr( \F'(x) )
\quad \text{ for all } x \in B_\rho(x_0) \, ,
\end{equation}
is satisfied, where $\F$ is defined as in \eqref{eq:single-op},
then $x_k \to x^\dag$.
\end{theorem}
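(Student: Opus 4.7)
The plan is the standard three-step scheme for Kaczmarz-type regularizations. By Proposition \ref{prop:monot}, $\|x_k-x^*\|$ is monotonically decreasing and thus convergent for every solution $x^*$; combined with the summability bounds \eqref{eq:bound-ser1}, \eqref{eq:bound-ser4}, \eqref{eq:bound-ser2} and \eqref{eq:bound-ser3}, this yields $\|F_{[k]}(x_k)-y_{[k]}\|\to 0$, $\|B_k\|\to 0$ and $\|x_{k+1}-x_k\|\to 0$ as $k\to\infty$.

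The central (and hardest) step is to show that $(x_k)$ is Cauchy. For $n\le m$ I would pick an intermediate index $l\in[n,m]$ and apply $\|x_n-x_m\|^2\le 2\|x_n-x_l\|^2+2\|x_l-x_m\|^2$ together with the identity
$$
\|x_n-x_l\|^2=\bigl(\|x_n-x^*\|^2-\|x_l-x^*\|^2\bigr)-2\langle x_n-x_l,\,x_l-x^*\rangle,
$$
whose first piece goes to zero by the monotone convergence of $\|x_k-x^*\|^2$. For the cross term, Lemma \ref{lem:ident}(b) gives $x_{j+1}-x_j=-\alpha^{-1}A_j^*B_j$ (with $A_j=F_{[j]}'(x_j)$, since $\omega_j=1$ for exact data), so
$$
\langle x_n-x_l,\,x_l-x^*\rangle=\alpha^{-1}\sum_{j=n}^{l-1}\bigl\langle B_j,\,A_j(x_j-x^*)+A_j(x_l-x_j)\bigr\rangle.
$$
Applying \eqref{eq:a-tcc} with $\bar x=x^*$, $x=x_j$ yields $\|A_j(x_j-x^*)\|\le(1+\eta)\|F_{[j]}(x_j)-y_{[j]}\|$, so the $A_j(x_j-x^*)$ contribution is dominated by the tail of the series in \eqref{eq:bound-ser2}. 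For the $A_j(x_l-x_j)$ contribution, I would choose $l$ to minimize $\|F_{[l]}(x_l)-y_{[l]}\|$ over a full cycle inside $[n,m]$, apply \eqref{eq:a-tcc} with $\bar x=x_l$, $x=x_j$ to bound $\|A_j(x_l-x_j)\|\le(1+\eta)\bigl(\|F_{[j]}(x_l)-y_{[j]}\|+\|F_{[j]}(x_j)-y_{[j]}\|\bigr)$, and exploit the cyclic structure together with the minimality of $l$ to dominate $\|F_{[j]}(x_l)-y_{[j]}\|$ by summable residuals. This bookkeeping is the main technical obstacle; the details parallel the analogous Landweber-Kaczmarz argument. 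The symmetric treatment of $\|x_l-x_m\|^2$ then closes the Cauchy estimate.

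Denote the limit by $\tilde x$. Continuity of $F_i$ from (A1) together with $\|F_i(x_{kN+i})-y_i\|\to 0$, which follows from \eqref{eq:bound-ser1} restricted to indices $k\equiv i\pmod N$, gives $F_i(\tilde x)=y_i$ for $i=0,\dots,N-1$, so $\tilde x$ solves \eqref{eq:inv-probl}.

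For the second assertion, Lemma \ref{lem:ident}(b) shows $x_{k+1}-x_k\in\mathcal R(F_{[k]}'(x_k)^*)=\ker(F_{[k]}'(x_k))^\perp$; since $\ker(\F'(x_k))\subseteq\ker(F_{[k]}'(x_k))$, the kernel condition \eqref{eq:kern-cond} gives $x_{k+1}-x_k\in\ker(\F'(x^\dag))^\perp$, and telescoping together with closedness of that subspace yields $\tilde x-x_0\in\ker(\F'(x^\dag))^\perp$. By Remark \ref{rem:mns} the same holds for $x^\dag-x_0$, so $\tilde x-x^\dag\in\ker(\F'(x^\dag))^\perp$. On the other hand, applying \eqref{eq:a-tcc} to each $F_i$ with $\bar x=\tilde x$, $x=x^\dag$ and using $F_i(\tilde x)=F_i(x^\dag)=y_i$ gives $F_i'(x^\dag)(\tilde x-x^\dag)=0$ for all $i$, so $\tilde x-x^\dag\in\ker(\F'(x^\dag))$. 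Hence $\tilde x=x^\dag$.
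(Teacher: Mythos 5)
Your overall architecture is the same as the paper's: monotonicity plus the four summability bounds \eqref{eq:bound-ser1}--\eqref{eq:bound-ser3}, a Cauchy argument built on telescoping the inner products $\langle x_{j+1}-x_j,\,\cdot\,\rangle$ via Lemma~\ref{lem:ident}(b) and the tangential cone condition, with a carefully selected intermediate index, and finally the range/kernel argument for the second assertion. The parts you actually carry out --- the reduction to the cross term, the bound $\|A_j(x_j-x^*)\|\le(1+\eta)\|F_{[j]}(x_j)-y_{[j]}\|$ and its absorption into \eqref{eq:bound-ser2}, the identification of the limit as a solution of \eqref{eq:inv-probl}, and the kernel-condition argument (where your direct use of \eqref{eq:a-tcc} to get $\F'(x^\dag)(\tilde x-x^\dag)=0$ replaces the paper's appeal to Remark~\ref{rem:mns}, and is fine) --- are all correct.

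The one genuine gap is exactly the step you flag as the ``main technical obstacle'', and the selection rule you propose for the intermediate index is not the right one. Minimizing the single residual $\|F_{[l]}(x_l)-y_{[l]}\|$ controls only the $[l]$-th equation at $x_l$, whereas you must dominate $\|F_{[j]}(x_l)-y_{[j]}\|$ for \emph{every} residue class $[j]$. The paper instead takes the intermediate index at the end of a cycle, $n=n_0N+N-1$, with $n_0$ minimizing over all cycles in the range the whole-cycle sum $\sum_{s=0}^{N-1}\{\|x_{i_0N+s+1}-x_{i_0N+s}\|+\|F_s(x_{i_0N+s})-y_s\|\}$, see \eqref{n0min}; it then writes $F_{i_1}(x_{n_0N+N-1})-y_{i_1}$ as $F_{i_1}(x_{n_0N+i_1})-y_{i_1}$ plus increments $F_{i_1}(x_{n_0N+s+1})-F_{i_1}(x_{n_0N+s})$, bounds each increment by $\tfrac{C}{1-\eta}\|x_{n_0N+s+1}-x_{n_0N+s}\|$ using \eqref{eq:a-tcc} and \eqref{eq:a-dfb}, and only then does minimality allow replacing the chosen cycle's sum by the $i_0$-th cycle's sum, after which Young's inequality and \eqref{eq:bound-ser1}, \eqref{eq:bound-ser3}, \eqref{eq:bound-ser4} close the estimate. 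Without the step norms $\|x_{n_0N+s+1}-x_{n_0N+s}\|$ included in the minimality criterion, the propagation of the $[j]$-th residual across the chosen cycle is not controlled by anything summable, and the bookkeeping does not close. So: right strategy and correct peripheral steps, but the decisive combinatorial choice is mis-specified and the central estimate is asserted rather than proved.
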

\begin{proof}
We define $e_k := x^\dag - x_k$. From Proposition \ref{prop:monot} it
follows that $\norm{e_k}$ is monotone non-increasing. Therefore,
$\norm{e_k}$ converges to some $\epsilon \geq 0$. In the following
we show that $e_k$ is in fact a Cauchy sequence.

In order to show that $e_k$ is a Cauchy sequence, it suffices to
prove $|\langle e_n - e_k , e_n \rangle | \to 0$,
$|\langle e_n - e_l , e_n \rangle | \to 0$ as $k , l \to \infty$
with $k \leq l$ for some $k \leq n \leq l$ \cite[Theorem~2.3]{HanNeuSch95}.
Let $k \leq l$ be arbitrary, $k = k_0 N + k_1$, $l = l_0 N + l_1$,
$k_1, l_1 \in \{ 0, \ldots, N-1\}$, let $n_0 \in \{ k_0, \ldots, l_0\}$
be such that
\begin{multline} \label{n0min}
\summ_{s=0}^{N-1}
\big\{ \| x_{n_0 N+s+1} - x_{n_0 N+s} \| + \| F_{s}(x_{n_0 N+s}) - y_s \| \big\}
\leq \\ \leq
\summ_{s=0}^{N-1}
\big\{ \| x_{i_0 N+s+1} - x_{i_0 N+s} \| + \|F_{s}(x_{i_0 N+s}) - y_s \| \big\}
\, , \quad \mbox{for all } i_0 \in \{ k_0, \ldots, l_0 \} \, ,
\end{multline}
and set $n = n_0 N + N-1$. Therefore
\begin{eqnarray}
\lefteqn{ |\langle e_n - e_k , e_n \rangle | } \nonumber \\
&=&
\Big| \summ_{i=k}^{n-1}
\langle (x_{i+1} - x_i), (x_n - x^\dag) \rangle \Big| \nonumber \\
&=&
\Big| \summ_{i=k}^{n-1} \alpha^{-1}
\langle F_{[i]}'(x_i)(x_{i+1}-x_i)+F_{[i]}(x_i)-y_{[i]} ,
        F_{[i]}'(x_i)(x_n - x^\dag) \rangle \Big| \nonumber \\
&=&
\Big| \summ_{i=k}^{n-1} \alpha^{-1}
\langle F_{[i]}'(x_i)(x_{i+1} - x_i) + F_{[i]}(x_i) - y_{[i]} ,
        F_{[i]}'(x_i)(x_n - x_i) + F_{[i]}'(x_i) (x_i - x^\dag)
\rangle \Big| \nonumber \\
&\leq&
\summ_{i=k}^{n-1} \alpha^{-1}
% \| F_{[i]}'(x_i)(x_{i+1} - x_i) + F_{[i]}(x_i) - y_{[i]} \| \nonumber \\
% &&\qquad\qquad 
\| B_i \| \,
\big[ (1+\eta) \, \| F_{[i]}(x_n) - F_{[i]}(x_i) \|
      +(1+\eta) \, \|F_{[i]}(x_i) - F_{[i]}(x^\dag) \| \big] \nonumber \\
&\leq&
\alpha^{-1} (1+\eta) \summ_{i=k}^{n-1}
% \| F_{[i]}'(x_i)(x_{i+1} - x_i) + F_{[i]}(x_i) - y_{[i]} \|
\| B_i \| \,
\big[ \| F_{[i]}(x_n) - y_{[i]} \| + 2 \|F_{[i]}(x_i) - y_{[i]} \| \big]
\label{esteneken}
\end{eqnarray}
With $i=i_0 N+i_1$ we get
\begin{eqnarray*}
\lefteqn{ \|F_{[i]}(x_n) - y_{[i]} \| } \\
&=& \| F_{i_1}(x_{n_0 N+N-1}) - y_{i_1} \| \\
&\leq& \| F_{i_1}(x_{n_0 N + i_1}) - y_{i_1} \|
+ \summ_{s=i_1}^{N-2} \|F_{i_1}(x_{n_0 N+s+1}) - F_{i_1}(x_{n_0 N+s}) \| \\
&\leq& \| F_{i_1}(x_{n_0 N+i_1}) - y_{i_1} \|
+ \frac{1}{1-\eta} \summ_{s=i_1}^{N-2} \|F_{i_1}'(x_{n_0 N+s})
  (x_{n_0 N+s+1} - x_{n_0 N+s}) \| \\
&\leq& \| F_{i_1}(x_{n_0 N+i_1}) - y_{i_1} \|
 + \frac{C}{1-\eta} \summ_{s=i_1}^{N-2} \| x_{n_0 N+s+1} - x_{n_0 N+s} \| \\
&\leq& \Big( 1+\frac{C}{1-\eta} \Big) \summ_{s=i_1}^{N-2}
\big\{ \| x_{n_0 N+s+1} - x_{n_0 N+s} \| + \| F_{s}(x_{n_0 N+s}) - y_s \| \big\}
\, .
\end{eqnarray*}
Hence, by minimality (\ref{n0min}) we get
$$
\| F_{[i]}(x_n)-y_{[i]}\| \ \leq \
\Big( 1 + \frac{C}{1-\eta} \big) \summ_{s=0}^{N-1}
\big\{ \|x_{i_0 N+s+1} - x_{i_0 N+s} \|
 + \| F_{s}(x_{i_0 N+s}) - y_s \| \big\} \, .
$$
Inserting this into (\ref{esteneken}) we obtain
\begin{eqnarray} \label{esteneken1}
| \langle e_n - e_k , e_n \rangle |
\ \leq \
2 \alpha^{-1} (1+\eta) \summ_{i=k}^{n-1}
% \| F_{[i]}'(x_i)(x_{i+1} - x_i) + F_{[i]}(x_i) - y_{[i]}\|
\| B_i \| \, \| F_{[i]}(x_i) - y_{[i]}\| \displaystyle
 + \Big( 1 + \frac{C}{1-\eta} \Big) (1+\eta) \ \mbox{sum}_2
\end{eqnarray}
with 
\begin{eqnarray*}
\mbox{sum}_2 &=& \summ_{i_0=k_0}^{n_0-1} \summ_{i_1=0}^{N-1}
\alpha^{-1} \| F_{i_1}'(x_{i_0 N+i_1}) (x_{i_0 N+i_1+1} - x_{i_0 N+i_1})
 + F_{i_1}(x_{i_0 N+i_1}) - y_{i_1} \| \\
&&\qquad\quad
\summ_{s=0}^{N-1} \big\{
\| x_{i_0 N+s+1} - x_{i_0 N+s}\| + \|F_{s}(x_{i_0 N+s}) - y_s \| \big\} \\
&\leq&
N \summ_{i_0=k_0}^{n_0-1} \summ_{i_1=0}^{N-1} \displaystyle
\Big\{ \frac12 \Big( \alpha^{-1}
\| F_{i_1}'(x_{i_0 N+i_1}) (x_{i_0 N+i_1+1} - x_{i_0 N+i_1})
 + F_{i_1}(x_{i_0 N+i_1}) - y_{i_1} \| \Big)^2 \\
&& \qquad \qquad \quad + \ \| x_{i_0 N+i_1+1} - x_{i_0 N+i_1}\|^2
 + \| F_{i_1}(x_{i_0 N+i_1}) - y_{i_1}\|^2 \Big\} \\
&\leq&
\frac{N}{\min\{2\alpha,1\}} \summ_{i=k_0}^{n}
\Big\{ \alpha^{-1}
% \| F_{[i]}'(x_{i}) (x_{i+1} - x_{i}) + F_{[i]}(x_{i}) - y_{[i]}\|^2 \\
\| B_i \|^2 + \| x_{i_0 N+i_1+1} - x_{i_0 N+i_1} \|^2
 + \| F_{i_1}(x_{i_0 N+i_1}) - y_{i_1} \|^2 \Big\} \, .
\end{eqnarray*}
Hence by \eqref{eq:bound-ser1}, \eqref{eq:bound-ser2}, \eqref{eq:bound-ser3},
\eqref{eq:bound-ser4}, both terms on the right hand side of (\ref{esteneken1})
go to zero as $k,l\to\infty$.
Analogously one shows that $\bracket{e_n - e_l , e_n} \to 0$ as $l \to \infty$.

Thus, $e_k$ is a Cauchy sequence and $x_k = x^\dag - e_k$ converges to some
% element $x^* \in X$. Since the residuals $\norm{ F_{[k]}(x_k) - y_{[k]} }$
% converge to zero, $x^*$ is solution of \eqref{eq:inv-probl}.
% JB1305
element $x^+ \in X$. Since the residuals $\norm{ F_{[k]}(x_k) - y_{[k]} }$
converge to zero, $x^+$ is a solution of \eqref{eq:inv-probl}.
% x^* hat schon eine Bedeutung in A3). Oder sehe ich dies falsch?
% JB1305
\medskip

Now assume $\nr( \F'(x^\dag) ) \subseteq \nr( \F'(x) )$, for $x \in
B_\rho(x_0)$. Since $x_{k+1} - x_k$ is either zero or $h_k$,
it follows from Lemma~\ref{lem:ident}~b) that $x_{k+1} - x_k \in
\ra(F_{[k]}'(x_k)^*) \subset \nr( F_{[k]}'(x_k) )^\bot \subset
\nr( \F'(x_k) )^\bot \subset \nr( \F'(x^\dag) )^\bot$.
An inductive argument shows that all iterates $x_k$ are elements
of $x_0 + \nr( \F'(x^\dag) )^\bot$.
% Together with the  continuity of $\F'(x^\dag)$, this implies that
Therefore, $x^* \in x_0 + \nr( \F'(x^\dag))^\bot$.
By Remark~\ref{rem:mns}, $x^\dag$ is the only solution of \eqref{eq:inv-probl}
in $B_{\rho/2}(x_0) \cap ( x_0  + \nr(\F '( x^\dag ))^\perp )$, and
so the second assertion follows.
\end{proof}

\begin{remark}
In order to consider the variable choice of $\alpha$ according to \eqref{alphakHanke}
let us consider for the moment a condition which is slightly
stronger than the tangential cone condition, namely the
{\em range invariance condition}
\medskip

\noindent (A2') \
There exist linear bounded operators $R_i(\bar{x},x)$ satisfying
\begin{equation} \label{eq:a-stcc}
F_i'(\bar{x}) \ = \ R_i(\bar{x},x) \, F_i'(x) \ , \quad
\norm{ R_i(\bar{x},x) - I } \ \le \ c_R \, , \qquad
x, \bar{x} \in B_{\rho}(x_0) \, ,
\end{equation}
for some $0 < c_R < 1$. \\
Notice that from $F_i'(x)( \bar{x} - x ) =
\big( \int_0^1 R_i(x + \theta (\bar{x} - x) , x) \, d\theta
\big)^{-1} ( F_i(\bar{x}) - F_i(x) )$, it follows that (A2')
implies (A2) with $\eta = c_R (1-c_R)^{-1}$.

If (A2) is substituted by (A2') in Theorem~\ref{th:exact},
the estimates can be improved to
\begin{eqnarray}
\lefteqn{|\langle e_n-e_k,e_n\rangle |}\nonumber\\
&\leq&
\frac{c_R}{2(1-c_R)}
\summ_{i_0=k_0}^{n_0-1}
\summ_{i_1=0}^{N-1} \left\{
\frac{\omega_{i_0 N+i_1}}{\alpha_{i_0 N+i_1}}
\|F_{i_1}'(x_i)(x_{i_0 N+i_1+1}-x_{i_0 N+i_1}
+F_{i_1}(x_{i_0 N+i_1})-y_{i_1}\|^2\hspace*{1.5cm}\right.\nonumber\\
&&\qquad\qquad\qquad
\left. + {\textstyle \frac{1}{q^2}\frac{\omega_{i_0 N+i_1}}{\alpha_{i_0 N+i_1}} }
\|F_{i_1}'(x_i)(x_{n_0 N+i_1+1}-x_{n_0 N+i_1})
+F_{i_1}(x_{n_0 N+i_1})-y_{i_1}\|^2
\right\}
\label{estimproivedk}\\
&&+\frac{N}{2q^2} \summ_{i_0=k_0}^{n_0-1}
\summ_{i_1=0}^{N-1}\left\{
\frac{\omega_{i_0 N+i_1}}{\alpha_{i_0 N+i_1}}
\|F_{i_1}'(x_i)(x_{i_0 N+i_1+1}-x_{i_0 N+i_1}
+F_{i_1}(x_{i_0 N+i_1})-y_{i_1}\|^2\right.\nonumber\\
&&\qquad\qquad\qquad
\left. + {\textstyle \frac{\omega_{n_0 N+i_1}}{\alpha_{n_0 N+i_1}} }
\|F_{i_1}'(x_i)(x_{n_0 N+i_1+1}-x_{n_0 N+i_1})
+F_{i_1}(x_{n_0 N+i_1})-y_{i_1}\|^2
\right\} \, .
\nonumber
\end{eqnarray}
This suggests that if, instead of \eqref{n0min} the index $n_0$ is
chosen from the more natural requirement
\begin{multline*} %\label{n0minimproved}
\summ_{s=0}^{N-1} \frac{\omega_{n_0 N+s}}{\alpha_{n_0 N+s}}
\| F_{s}'(x_{n_0 N+s}) ( x_{n_0 N+s+1} - x_{n_0 N+s} ) + F_{s}(x_{n_0 N+s}) - y_s \|^2
\leq \\
\summ_{s=0}^{N-1} \frac{\omega_{i_0 N+s}}{\alpha_{i_0 N+s}}
\| F_{s}'(x_{i_0 N+s}) ( x_{i_0 N+s+1} - x_{i_0 N+s} ) + F_{s}(x_{i_0 N+s}) - y_s \|^2
\quad \mbox{for all } i_0 \in \{ k_0, \ldots, l_0 \} \, ,
\end{multline*}
then the analysis might extend to the variable parameter choice \eqref{alphakHanke}.
Note however, that due to the factor $\frac{\omega_{i_0 N+i_1}}{\alpha_{i_0 N+i_1}}$
(instead of an in this context desirable $i_0$-independent factor
$\frac{\omega_{n_0 N+i_1}}{\alpha_{n_0 N+i_1}}$) in (\ref{estimproivedk}),
this is unfortunately not the case.
\end{remark}

% --------------------------------------------------------------------
\section{Convergence for noisy data} \label{sec:conv-noise}

Throughout this section, we assume that (A1) - (A3) hold true
and that $x_k^\delta$, $h_k$, $\alpha$, $\tau$ and $q$ are defined
by \eqref{eq:lmk}, \eqref{def:hk} and \eqref{def:alp-tau}.
% 
% BK 22april09 vvvvv
% Moreover, we introduce the following assumption:
% \medskip
% 
% \noindent (A4) \
% The operators $F_i$ in \eqref{eq:inv-probl} and it's derivatives $F'_i$
% are Lipschitz continuous, i.e., there exists a constant $L$ such that
% $$
% \norm{F_i(x) - F_i(\bar x)} + \norm{F'_i(x) - F'_i(\bar x)} \ \le \
% L \, \norm{x - \bar x} \, , \quad
% {\rm for \ all} \ x , \bar x \in B_\rho(x_0).
% $$
% Moreover, the constants $\alpha$ in \eqref{def:alp-tau} and $C$ in
% \eqref{eq:a-dfb} satisfy $C^2 < \alpha$.
% BK 22april09 ^^^^^
% \medskip

Our main goal in this section is to prove that $x_{k_*^\delta}^\delta$
converges to a solution of \eqref{eq:inv-probl} as $\delta \to 0$,
where $k_*^\delta$ is defined in \eqref{def:discrep-lmk}.
The first step is to verify that, for noisy data, the stopping index
$k_*^\delta$ is well defined.

\begin{propo} \label{prop:st-fin-lmk}
Assume $\delta_{\rm min} := \min \{ \delta_0, \dots  \delta_{N-1} \} > 0$.
Then $k_*^\delta$ in \eqref{def:discrep-lmk} is finite, and the estimate
$k_*^\delta = O(\delta_{\rm min}^{-2})$ holds true. Moreover,
\begin{equation} \label{eq:lmk-res-discr}
\norm{ F_{i}(x_{k_*^\delta}^\delta) - y_i^\delta } \ < \
\tau \delta_i \, , \qquad i = 0, \dots, N-1 \, .
\end{equation}
\end{propo}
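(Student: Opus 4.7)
The plan has two ingredients: a ``one-step decrease'' bound coming from Lemma~\ref{lem:monot-aux} and a combinatorial counting argument that converts the count of active steps into a bound on $k_*^\delta$ itself.

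First I would dispose of the discrepancy inequality \eqref{eq:lmk-res-discr}. By the very definition \eqref{def:discrep-lmk}, $x_{k_*^\delta}^\delta = x_{k_*^\delta+1}^\delta = \cdots = x_{k_*^\delta+N}^\delta$, which via \eqref{eq:lmk}--\eqref{def:hk} forces $\omega_k = 0$ for $k = k_*^\delta,\dots,k_*^\delta+N-1$ (indeed $h_k$ is the Levenberg--Marquardt step, which vanishes only if the residual vanishes, and the latter need not be the case). Consequently $\norm{F_{[k]}(x_k^\delta) - y_{[k]}^\delta} < \tau \delta_{[k]}$ for these indices. Since $x_k^\delta = x_{k_*^\delta}^\delta$ for all such $k$ and $[k]$ runs through every element of $\{0,\dots,N-1\}$ over one cycle, \eqref{eq:lmk-res-discr} follows.

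Next I would bound the number $M$ of active steps, i.e.\ indices $k < k_*^\delta$ with $\omega_k = 1$. From Proposition~\ref{prop:monot} the iterates stay in $B_\rho(x_0)$, so Lemma~\ref{lem:monot-aux} applies. When $\omega_k = 1$, the stopping criterion \eqref{def:omk} yields $\norm{F_{[k]}(x_k^\delta) - y_{[k]}^\delta} \ge \tau \delta_{[k]}$, hence
\[
(\eta-q)\norm{F_{[k]}(x_k^\delta) - y_{[k]}^\delta} + (1+\eta)\delta_{[k]}
\le -c_0\norm{F_{[k]}(x_k^\delta) - y_{[k]}^\delta},
\quad c_0 := q-\eta-(1+\eta)\tau^{-1} > 0
\]
by \eqref{def:alp-tau}. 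Combining this with the lower bound $\norm{B_k^\delta} \ge q\norm{F_{[k]}(x_k^\delta) - y_{[k]}^\delta}$ from \eqref{eq:monot-auxB} and $\norm{F_{[k]}(x_k^\delta) - y_{[k]}^\delta} \ge \tau\delta_{\min}$ gives, for every active step,
\[
\norm{x_{k+1}^\delta - x^*}^2 - \norm{x_k^\delta - x^*}^2
\le -\frac{2qc_0}{\alpha}\norm{F_{[k]}(x_k^\delta) - y_{[k]}^\delta}^2
\le -\frac{2qc_0\tau^2}{\alpha}\,\delta_{\min}^2.
\]
Non-active steps leave the error unchanged, so telescoping from $0$ to $k_*^\delta - 1$ and using non-negativity of the final squared error gives $M \le \alpha\,\norm{x_0-x^*}^2 / (2qc_0\tau^2\,\delta_{\min}^2)$.

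Finally I would convert the bound on $M$ into a bound on $k_*^\delta$ via the cycle structure. By minimality of $k_*^\delta$, every complete cycle $\{lN,\dots,lN+N-1\}$ with $lN < k_*^\delta$ must contain at least one index $k$ with $\omega_k = 1$ (otherwise $lN$ itself would satisfy the definition of $k_*^\delta$, contradicting minimality). Hence the number $k_*^\delta/N$ of such cycles is at most $M$, and
\[
k_*^\delta \le NM \le \frac{N\alpha\,\norm{x_0 - x^*}^2}{2qc_0\tau^2}\,\delta_{\min}^{-2} = O(\delta_{\min}^{-2}).
\]
The only delicate point in the whole argument is extracting a \emph{uniform} per-active-step decrease; this is what forces the hypothesis $\delta_{\min} > 0$ and is precisely what lets the counting argument give an $O(\delta_{\min}^{-2})$ bound rather than merely finiteness.
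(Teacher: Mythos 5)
Your counting argument for finiteness and for $k_*^\delta = O(\delta_{\rm min}^{-2})$ is correct and is essentially the paper's proof in different packaging: the paper derives the same per-active-step decrease of at least $2q\tau\,[\tau(q-\eta)-(1+\eta)]\,\alpha^{-1}\delta_{\rm min}^2$ in $\norm{x_k^\delta-x^*}^2$ and runs it as a contradiction (at least one non-loped step per cycle as long as the iteration has not become stationary), arriving at exactly your constant, since $\tau\,[\tau(q-\eta)-(1+\eta)]=\tau^2 c_0$.

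The genuine gap is in your one-line treatment of \eqref{eq:lmk-res-discr}. You pass from $x_{k_*^\delta}^\delta=\dots=x_{k_*^\delta+N}^\delta$ to $\omega_k=0$ on the whole cycle by asserting that $h_k$ ``vanishes only if the residual vanishes.'' That is false. With $A_k=F_{[k]}'(x_k^\delta)$ and $r_k=y_{[k]}^\delta-F_{[k]}(x_k^\delta)$ one has $h_k=(A_k^*A_k+\alpha I)^{-1}A_k^*r_k$, and since $(A_k^*A_k+\alpha I)^{-1}$ is injective, $h_k=0$ precisely when $A_k^*r_k=0$, i.e.\ when $r_k\in\overline{\ra(A_k)}^{\,\perp}$ --- for an ill-posed problem a nontrivial subspace, so $r_k$ need not vanish. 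Hence the case $\omega_k=1$, $h_k=0$, $\norm{r_k}\ge\tau\delta_{[k]}$ is not excluded by your algebra, and without excluding it neither the equivalence ``stationary cycle $\Leftrightarrow$ all $\omega_k=0$'' nor \eqref{eq:lmk-res-discr} follows. Ruling it out genuinely requires (A2)--(A3), which is what the paper's proof does: if $\omega_k=1$ and $x_{k+1}^\delta=x_k^\delta$, then \eqref{eq:monot-aux} collapses to $0\le 2\alpha^{-1}\norm{B_k^\delta}\bigl[(\eta-q)\norm{r_k}+(1+\eta)\delta_{[k]}\bigr]$, and since $\norm{B_k^\delta}\ge q\norm{r_k}\ge q\tau\delta_{\rm min}>0$ by \eqref{eq:monot-auxB} and $\delta_{[k]}\le\tau^{-1}\norm{r_k}$, this forces $\eta-q+(1+\eta)\tau^{-1}\ge 0$, contradicting \eqref{def:alp-tau}. (Equivalently: $A_k^*r_k=0$ combined with \eqref{eq:a-tcc}, \eqref{eq:noisy-i} and $\F(x^*)=y$ gives $\norm{r_k}\le\tfrac{1+\eta}{1-\eta}\delta_{[k]}<\tau\delta_{[k]}$, so $\omega_k=0$ after all.) Your argument needs this extra step; as written, the justification offered for the key implication is mathematically incorrect.
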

\begin{proof}
Assume by contradiction that for every $l \in \N$, there exists a
$i(l) \in \set{0,\dots, N-1}$ such that
$\norm{ F_{i(l)} (x_{lN + i(l)}) - y_{i(l)}^\delta } \ge \tau\delta_{i(l)}$.
From Proposition~\ref{prop:monot} it follows that \eqref{eq:monot-aux} holds
for $k=1, \dots, lN$. Summing over $k$, leads to
$$
- \norm{x_0 - x^*}^2
  \leq  \summ_{k=1}^{lN-1} \displaystyle 2 \frac{\omega_k}{\alpha}
        \norm{ B_k^\delta } \big[ (\eta - q) \,
        \norm{ F_{[k]}(x_{k+1}^\delta) - y_{[k]}^\delta } +
        (1 + \eta) \delta_{[k]} \big] \, , \quad l \in \N \, .
$$
Using the fact that either $\omega_k=0$ or
$\norm{ F_{[k]}(x_k^\delta) - y_{[k]}^\delta } > \tau \delta_{[k]}$,
we obtain
\begin{equation} \label{eq:st-finite-aux1}
\norm{x_0 - x^*}^2 \ \geq \
  \summ_{k=1}^{lN-1} \displaystyle 2 \frac{\omega_{k}}{\alpha}
  \norm{ B_k^\delta } \, \delta_{[k]} \, \big[ \tau (q-\eta) - (1+\eta) \big] \, .
\end{equation}
From equations \eqref{eq:monot-auxB}, \eqref{eq:st-finite-aux1} and the fact that
$\omega_{l'N + i(l')} = 1$
% $x_{l'N + i(l')} \not= x_{l'N}$
for all $l' \in \N$, we obtain
\begin{equation} \label{eq:st-finite-aux2}
\norm{x_0 - x^*}^2 \ \geq \
  \Big[ \tau (q-\eta) - (1+\eta) \Big]
  \, 2 l \, \frac{q \tau \delta_{\rm min}^2}{\alpha}
  \, , \quad l \in \N \, .
\end{equation}
Due to \eqref{def:alp-tau}, the right hand side of \eqref{eq:st-finite-aux2}
tends to $+\infty$ as $l \to \infty$, which gives a contradiction.
Consequently, the minimum in (\ref{def:discrep-lmk}) takes a finite value.

To prove $k_*^\delta = O(\delta_{\rm min}^{-2})$, it is enough to take
$l = k_*^\delta / N \in \mathbb N$ in \eqref{eq:st-finite-aux2} and estimate
$$
k_*^\delta \ \le \ \alpha N \, \norm{x_0 - x^*}^2 \,
                   \Big( 2 q \tau \, [\tau (q-\eta) - (1+\eta)]
                         \, \delta_{\rm min}^2 \Big)^{-1} \, .
$$

It remains to prove \eqref{eq:lmk-res-discr}. Assume to the contrary that
$\norm{ F_{i}(x_{k_*^\delta}^\delta) - y_i^\delta } \geq \tau \delta_i$
for some $i \in \set{0,\dots, N-1}$. From \eqref{def:omk} and
\eqref{def:discrep-lmk} it follows that $\omega_{k_*^\delta+i} = 1$ and
$x_{k_*^\delta+i+1}^\delta = x_{k_*^\delta+i}^\delta$ respectively.
Thus, from \eqref{eq:monot-aux} and \eqref{eq:monot-auxB} it follows that
\begin{eqnarray*}
0 & \le & 2 \alpha^{-1} \norm{ B_{k_*^\delta+i}^\delta } \,
   \big[ (\eta-q) \norm{ F_{i}(x_{k_*^\delta+i}^\delta) - y_{i}^\delta }
         + (1+\eta) \delta_{i} \big] \\
  & \le & 2 \alpha^{-1} \,
  \norm{ F_{i}(x_{k_*^\delta}^\delta) - y_{i}^\delta }^2
  \big[ (\eta-q) + (1+\eta) \tau^{-1} \big] \, .
\end{eqnarray*}
However, since $\norm{ F_{i}(x_{k_*^\delta}^\delta) - y_{i}^\delta } \geq
\tau \delta_{min} > 0$, the inequality above leads to
$[(\eta-q) + (1+\eta) \tau^{-1}] \geq 0$. This contradicts \eqref{def:alp-tau},
completing the proof of \eqref{eq:lmk-res-discr}.
\end{proof}

\begin{lemma} \label{lem:stabil-lmk}
Let $\delta_j = (\delta_{j,0}, \dots, \delta_{j,N-1}) \in (0,\infty)^N$
be given with $\lim_{j\to\infty} \delta_j = 0$. Moreover, let
$y^{\delta_j} = (y_0^{\delta_j}, \dots, y_{N-1}^{\delta_j}) \in Y^N$ be a
corresponding sequence of noisy data satisfying
$\norm{ y_i^{\delta_j} - y_i } \le \delta_{j,i}$, $i = 0, \dots, N-1$, $j \in \N$.
Then, for each fixed $k \in \N$ we have $\lim_{j\to\infty} x_k^{\delta_j} = x_k$.
\end{lemma}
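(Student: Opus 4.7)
I would prove this by induction on $k$. The base case $k=0$ is trivial since $x_0^{\delta_j}=x_0=x_0^\delta$ for every $j$ by the choice of initial guess. For the inductive step, assume $x_k^{\delta_j}\to x_k$ as $j\to\infty$. Since $x_k\in B_{\rho/2}(x^*)\subset B_\rho(x_0)$ by Proposition~\ref{prop:monot}, continuity of $F_{[k]}$ and $F_{[k]}'$ on $B_\rho(x_0)$ (assumption (A1)) gives $F_{[k]}(x_k^{\delta_j})\to F_{[k]}(x_k)$ and $F_{[k]}'(x_k^{\delta_j})\to F_{[k]}'(x_k)$ (in operator norm, via (A1)), and by hypothesis $y_{[k]}^{\delta_j}\to y_{[k]}$. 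The map $A\mapsto (A^*A+\alpha I)^{-1}A^*$ is continuous on bounded operators (with $\alpha>0$ fixed), so the regularized step
\[
\widetilde h_k^{\delta_j} := (F_{[k]}'(x_k^{\delta_j})^*F_{[k]}'(x_k^{\delta_j})+\alpha I)^{-1}F_{[k]}'(x_k^{\delta_j})^*(y_{[k]}^{\delta_j}-F_{[k]}(x_k^{\delta_j}))
\]
converges to $h_k$ as $j\to\infty$. This gives continuity of the inner update; the obstacle is the discontinuous bang-bang factor $\omega_k^{\delta_j}$.

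I would handle $\omega_k^{\delta_j}$ by splitting into two cases according to the residual at the limit iterate.

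\emph{Case 1: $\|F_{[k]}(x_k)-y_{[k]}\|>0$.} Then $\|F_{[k]}(x_k^{\delta_j})-y_{[k]}^{\delta_j}\|\to \|F_{[k]}(x_k)-y_{[k]}\|>0$, whereas $\tau\delta_{j,[k]}\to 0$. Hence for all $j$ sufficiently large the threshold in \eqref{def:omk} is exceeded and $\omega_k^{\delta_j}=1$, matching $\omega_k=1$ from the exact-data iteration. Therefore $x_{k+1}^{\delta_j}=x_k^{\delta_j}+\widetilde h_k^{\delta_j}\to x_k+h_k=x_{k+1}$.

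\emph{Case 2: $F_{[k]}(x_k)=y_{[k]}$.} Then the exact step satisfies $h_k=0$, so $x_{k+1}=x_k$. On the noisy side, $\widetilde h_k^{\delta_j}\to h_k=0$ by the continuity argument above, hence regardless of whether $\omega_k^{\delta_j}$ equals $0$ or $1$, we have $\|\omega_k^{\delta_j}\widetilde h_k^{\delta_j}\|\le\|\widetilde h_k^{\delta_j}\|\to 0$, so $x_{k+1}^{\delta_j}\to x_k=x_{k+1}$.

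In both cases the inductive step closes, which completes the proof. The main (only) subtle point is Case 2, where the bang-bang switch need not converge to its exact-data value $1$; the rescue is that the step it multiplies tends to zero, so the discontinuity is harmless.
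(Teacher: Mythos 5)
Your proof is correct and reaches the same conclusion by the same overall strategy as the paper's: induction on $k$, with the inductive step resting on norm-continuity of $x\mapsto F_{[k]}'(x)$ and of $A\mapsto (A^*A+\alpha I)^{-1}A^*$ (which the paper makes explicit via a resolvent identity together with the bounds $\norm{(A^*A+\alpha I)^{-1}}\le\alpha^{-1}$ and $\norm{(A^*A+\alpha I)^{-1}A^*}\le \tfrac{1}{2}\alpha^{-1/2}$). Where you differ is in how the discontinuous switch is handled. The paper splits, for each fixed $j$, on the value of the noisy relaxation parameter: when $\omega_k^{\delta_j}=1$ it bounds $\norm{h_k^{\delta_j}-h_k}$ directly, and when $\omega_k^{\delta_j}=0$ it bounds the omitted exact step by $\norm{h_k}\le \tfrac{1}{2}\alpha^{-1/2}\bigl(\norm{F_{[k]}(x_k)-F_{[k]}(x_k^{\delta_j})}+(\tau+1)\delta_{j,[k]}\bigr)$, with both bounds tending to zero. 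Your split on whether the exact residual $\norm{F_{[k]}(x_k)-y_{[k]}}$ vanishes is the qualitative counterpart of this: your Case~2 is precisely the limiting form of the paper's $\omega_k^{\delta_j}=0$ estimate (small noisy residual plus continuity forces the exact step to be small), while your Case~1 shows the switch is eventually on, so only the first estimate is ever needed. The two arguments are equivalent in substance; yours is slightly cleaner in that each case hypothesis is independent of $j$ and one never has to track which bound applies for a given $j$, whereas the paper's version produces explicit constants that could be reused quantitatively.
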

\begin{proof}
The proof of Lemma~\ref{lem:stabil-lmk} uses an inductive argument
in $k$. First assume $k=0$ and notice that $x^{\delta_j}_0 = x_0$ for
$j\in\N$. 
Now, take $k > 0$ and assume that for all $k' \le k$ we have $\lim_{j\to\infty}
x_{k'}^{\delta_j} = x_{k'}$. Two cases must be considered:
If $\omega_k = 1$ we estimate
\begin{align} \label{eq:case-w1-ind-lmk}
&\norm{ x_{k+1}^{\delta_j} - x_{k+1} }^2 
\nonumber\\
& \le \norm{ x_k^{\delta_j} - x_k }
+ \norm{ h_k^{\delta_j} - h_k }
\nonumber\\
& \le \norm{ x_k^{\delta_j} - x_k }
\nonumber\\
& \quad + \norm{( F_{[k]}'(x_k^\delta)^* F_{[k]}'(x_k^\delta) + \alpha I)^{-1}
          F_{[k]}'(x_k^\delta)^* 
	-( F_{[k]}'(x_k)^* F_{[k]}'(x_k) + \alpha I)^{-1}
          F_{[k]}'(x_k)^*}
\nonumber\\
& \qquad  \cdot\norm{y_{[k]}^\delta - F_{[k]}(x_k^\delta)}
\nonumber\\
& \quad + \norm{( F_{[k]}'(x_k)^* F_{[k]}'(x_k) + \alpha I)^{-1}
          F_{[k]}'(x_k)^*} \cdot
	\norm{ y_{[k]}^\delta - y_{[k]} - F_{[k]}(x_k^\delta) + F_{[k]}(x_k)} 
\nonumber\\
& \le \norm{ x_k^{\delta_j} - x_k }
\nonumber\\
& \quad + 9/4 \,\alpha^{-1}\norm{ F_{[k]}'(x_k^\delta) - F_{[k]}'(x_k) }
	\norm{y_{[k]}^\delta - F_{[k]}(x_k^\delta)}
\nonumber\\
& \quad + 1/2 \,\alpha^{-1/2} \
	\big\{ \delta_{[k]} 
	+ \norm{ F_{[k]}(x_k^\delta) - F_{[k]}(x_k) } \big\}\,,
\end{align}
where we have used the identity
\begin{align*}
&(A^*A+\alpha I)^{-1} A^* - (B^*B+\alpha I)^{-1} B^*\\
& = (A^*A+\alpha I)^{-1} (A^*-B^*)
   + (A^*A+\alpha I)^{-1}(B^*B-A^*A)(B^*B+\alpha I)^{-1} B^*\\
& = (A^*A+\alpha I)^{-1} (A-B)^* 
   + (A^*A+\alpha I)^{-1}(A^*(B-A)+(B^*-A^*)B)(B^*B+\alpha I)^{-1} B^*
\end{align*}
and the estimates
$$ 
\norm{(A^*A+\alpha I)^{-1}}\leq \alpha^{-1}\,, \quad 
\norm{(A^*A+\alpha I)^{-1}A^*}\leq 1/2 \,\alpha^{-1/2}\,, \quad 
\norm{A(A^*A+\alpha I)^{-1}A^*}\leq 1\,.
$$
for linear operators $A$, $B$.
Otherwise, if $\omega_k = 0$ we have $x_{k+1}^{\delta_j} = x_k^{\delta_j}$ and
$\norm{ F_{[k]}(x_k^{\delta_j}) - y_{[k]}^{\delta_j} } \le \tau \delta_{k,j}$.
Therefore,
\begin{align} \label{eq:case-w0-ind-lmk}
\norm{ x_{k+1}^{\delta_j} & - x_{k+1} }
\  =  \ \norm{ x_k^{\delta_j} - (x_k + h_k) }
\ \le \ \norm{ x_k^{\delta_j} - x_k } 
	+ 1/2 \,\alpha^{-1/2} \norm{ F_{[k]}(x_k) - y_{[k]} }
        \nonumber \\
& \le \ \norm{ x_k^{\delta_j} - x_k } 
	+ 1/2 \,\alpha^{-1/2}\big\{
        \norm{ F_{[k]}(x_k) - F_{[k]}(x_k^{\delta_j}) } +
        \norm{ F_{[k]}(x_k^{\delta_j}) - y_{[k]}^{\delta_j} } +
        \norm{ y_{[k]}^{\delta_j} - y_{[k]} } \big\} \nonumber \\
& \le \ \norm{ x_k^{\delta_j} - x_k } 
	+ 1/2 \,\alpha^{-1/2} \big\{
        \norm{ F_{[k]}(x_k) - F_{[k]}(x_k^{\delta_j}) }
	 + (\tau+1) \delta_{k,j} \big\} \, .
\end{align}
Thus, it follows from \eqref{eq:case-w1-ind-lmk}, \eqref{eq:case-w0-ind-lmk},
the continuity of $F_{[k]}$, $F_{[k]}'$, and the induction hypothesis that
$\lim\limits_{j\to\infty} x_{k+1}^{\delta_j} = x_{k+1}$.
\end{proof}

Now we are ready to state a semi-convergence result for the loping
Levenberg-Marquardt-Kaczmarz iteration. For the proof, both
Proposition~\ref{prop:st-fin-lmk} and Lemma~\ref{lem:stabil-lmk} are required.

\begin{theorem}[Convergence for noisy data] \label{th:semiconv-lmk}
Let $\delta_j = (\delta_{j,0}, \dots$, $\delta_{j,N-1})$ be a given
sequence in $(0,\infty)^N$ with $\lim_{j\to\infty} \delta_j = 0$, and
let $y^{\delta_j} = (y^{\delta_j}_{0}, \dots, y^{\delta_j}_{N-1}) \in Y^N$ be a
corresponding sequence of noisy data satisfying
$$
\norm{ y_i^{\delta_j} - y_i } \le \delta_{j,i} \, , \quad
i = 0, \dots, N-1 \, , \ j \in \N \, .
$$
Denote by $k^j_* := k_*(\delta_j, y^{\delta_j})$ the corresponding stopping
index defined in \eqref{def:discrep-lmk}. Then $x_{k^j_*}^{\delta_j}$
converges to a solution 
% $x^*$ %%%%(Siehe oben)
$x^+$ of \eqref{eq:inv-probl}.
Moreover, if \eqref{eq:kern-cond} holds, then $x_{k^j_*}^{\delta_j}$
converges to $x^\dag$.
\end{theorem}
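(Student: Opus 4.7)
The plan is to show that every subsequence of $\{x^{\delta_j}_{k^j_*}\}$ admits a further sub-subsequence converging to $x^+ := \lim_{k \to \infty} x_k$ (the solution supplied by Theorem~\ref{th:exact}), which forces the full sequence to converge to $x^+$. The kernel-condition claim will then follow immediately from the second assertion of Theorem~\ref{th:exact}, since $x^+ = x^\dag$ under \eqref{eq:kern-cond}. Within any fixed subsequence I distinguish two cases according to whether the corresponding stopping indices $k^j_*$ are bounded.

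If $\{k^j_*\}$ admits a bounded sub-subsequence, I pass to a further sub-subsequence along which $k^j_* \equiv k_0$ is constant. Lemma~\ref{lem:stabil-lmk} then yields $x^{\delta_j}_{k_0} \to x_{k_0}$, while the discrepancy bound \eqref{eq:lmk-res-discr} together with continuity of the $F_i$ and $\delta_{j,i} \to 0$ forces $F_i(x_{k_0}) = y_i$ for $i = 0, \dots, N-1$, so $x_{k_0}$ solves \eqref{eq:inv-probl}. Since $h_k$ vanishes whenever $x_k$ is a solution, the exact iteration is stationary from $k_0$ onwards, hence $x_{k_0} = x^+$.

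Otherwise $k^j_* \to \infty$ along the subsequence. I would fix $N \in \N$ and, for $j$ large enough that $k^j_* \geq N$, invoke monotonicity with target $x^+$ to get
$$
\|x^{\delta_j}_{k^j_*} - x^+\|^2 \ \leq \ \|x^{\delta_j}_N - x^+\|^2,
$$
then send $j \to \infty$ via Lemma~\ref{lem:stabil-lmk} to obtain $\limsup_j \|x^{\delta_j}_{k^j_*} - x^+\|^2 \leq \|x_N - x^+\|^2$, and finally let $N \to \infty$ using $x_N \to x^+$ from Theorem~\ref{th:exact}. The main obstacle is justifying this monotonicity with $x^+$ in place of the distinguished solution $x^*$ from (A3): Proposition~\ref{prop:monot} only guarantees $x^+ \in \overline{B_{\rho/2}(x^*)} \subset B_\rho(x_0)$, not $x^+ \in B_{\rho/2}(x_0)$. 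However, inspection of the proof of Lemma~\ref{lem:monot-aux} reveals that the tangential cone condition (A2) is applied only to pairs of points in $B_\rho(x_0)$, so the monotonicity estimate \eqref{eq:lmk-monot} extends verbatim to any solution lying in $B_\rho(x_0)$, and in particular to $x^+$; once this extension is in hand, the two cases above complete the proof.
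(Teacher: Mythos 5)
Your proposal is correct and follows essentially the same route as the paper, whose proof is only a two-line sketch dividing the argument into the same two cases (bounded stopping indices handled via Proposition~\ref{prop:st-fin-lmk} and Lemma~\ref{lem:stabil-lmk}; unbounded stopping indices handled via Proposition~\ref{prop:monot}, Theorem~\ref{th:exact} and Lemma~\ref{lem:stabil-lmk}) and otherwise defers to the literature. You also correctly identify and resolve the one genuinely delicate point --- that the monotonicity estimate must be applied with the limit $x^+$ of the exact iteration in place of $x^*$, which works because $x^+$ is a solution lying in $B_\rho(x_0)$ and (A2) is only ever invoked for pairs of points in that ball.
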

\begin{proof}
The proof follows the lines of \cite[Theor.3.6]{CHLS08} (see also
\cite{HanNeuSch95}) and is divided into two cases. For the first case
Proposition~\ref{prop:st-fin-lmk} and Lemma~\ref{lem:stabil-lmk} are needed.
For the second case, we need Proposition~\ref{prop:monot},
Theorem~\ref{th:exact} and Lemma~\ref{lem:stabil-lmk}.
\end{proof}

% --------------------------------------------------------------------
\section{Numerical experiment} \label{sec:numeric}

% -------------------------------------
\subsection{Description of the model problem}

In this section we introduce a model which plays a key rule in
inverse doping problems related to measurements of the current flow,
namely the {\em linearized stationary bipolar case close to equilibrium}.

This model is obtained from the drift diffusion equations by linearizing
the Voltage-Current (VC) map at $U \equiv 0$ \cite{Le06,BELM04},
where the function $U = U(x)$ denotes the applied potential to the
semiconductor device.
This simplification is motivated by the fact that, due to hysteresis effects
for large applied voltage, the VC-map can only be defined as a single-valued
function in a neighborhood of $U=0$.
Additionally, we assume that the electron mobility $\mu_n(x) = \mu_n > 0$
and hole mobility $\mu_p(x) = \mu_p > 0$ are constant and that no
recombination-generation rate is present \cite{LMZ06,LMZ06a}.

Under these assumptions the Gateaux derivative of the VC-map $\Sigma_C$ at
the point $U=0$ in the direction $h \in H^{3/2}(\partial\Omega_D)$ is given
by the expression
\begin{equation} \label{eq:def-sigma-prime-C}
\Sigma'_C(0) h = \int_{\Gamma_1}
\left( \mu_n \, e^{V_{\rm bi}} \hat{u}_\nu -
\mu_p \, e^{-V_{\rm bi}} \hat{v}_\nu \right) \, ds ,
\end{equation}
where the concentrations of electrons and holes $(\hat{u}, \hat{v})$
(written in terms of the Slotboom variables) solve
\begin{subequations}  \label{eq:bipol-stat}  \begin{eqnarray}
{\rm div}\, (\mu_n e^{V^0} \nabla \hat{u})   & \hskip-1.7cm
 = \ 0                \label{eq:bipol-statA} & {\rm in}\ \Omega \\
{\rm div}\, (\mu_p e^{-V^0} \nabla \hat{v})  & \hskip-1.7cm
 = \ 0                \label{eq:bipol-statB} & {\rm in}\ \Omega \\
\hat{u} & \hskip-0.1cm
 = \ -\hat{v} \ = \ -h                       & {\rm on}\ \partial\Omega_D \\
\nabla\hat{u} \cdot\nu &
 = \ \nabla\hat{v} \cdot\nu \ = \ 0          & {\rm on}\ \partial\Omega_N
\end{eqnarray} \end{subequations}
and the potential $V^0$ is the solution of the thermal equilibrium problem
\begin{subequations}  \label{eq:equil-case} \begin{eqnarray}
\lambda^2 \, \Delta V^0 &                       \label{eq:equil-caseA}
   = \ e^{V^0} - e^{-V^0} - C(x)              & {\rm in}\ \Omega \\
V^0 & \hskip-2.1cm                              \label{eq:equil-caseB}
   = \ V_{\rm bi}(x)                          & {\rm on}\ \partial\Omega_D \\
\nabla V^0 \cdot \nu & \hskip-2.95cm            \label{eq:equil-caseC}
   = \ 0                                      & {\rm on}\ \partial\Omega_N \, .
\end{eqnarray} \end{subequations}
Here $\Omega \subset \R^d$ is a domain representing the semiconductor
device; the boundary $\partial\Omega$ of $\Omega$ is divided into two
nonempty disjoint parts: $\partial\Omega = \overline{\partial\Omega_N} \cup
\overline{\partial\Omega_D}$. The Dirichlet part of the boundary
$\partial\Omega_D$ models the Ohmic contacts, where the potential $V$ as
well as the concentrations $\hat{u}$ and $\hat{v}$ are prescribed;
the Neumann part $\partial\Omega_N$
of the boundary corresponds to insulating surfaces, thus a zero current
flow and a zero electric field in the normal direction are prescribed;
the Dirichlet part of the boundary splits into $\partial\Omega_D =
\Gamma_0 \cup \Gamma_1$, where the disjoint boundary parts $\Gamma_i$,
$i=0,1$, correspond to distinct contacts (differences in $U(x)$ between
different segments of $\partial\Omega_D$ correspond to the applied bias
between these two contacts).

The function $C(x)$ is the {\em doping profile} and models a preconcentration
of ions in the crystal, so $C(x) = C_{+}(x) - C_{-}(x)$ holds, where $C_{+}$
and $C_{-}$ are concentrations of negative and positive ions respectively.
In those subregions of $\Omega$ in which the preconcentration of negative
ions predominate (P-regions), we have $C(x) < 0$. Analogously, we define the
N-regions, where $C(x) > 0$ holds.
The boundaries between the P-regions and N-regions (where $C$ changes sign)
are called {\em pn-junctions}. Moreover, $V_{\rm bi}$ is a given logarithmic
function \cite{BELM04}.

% -------------------------------------
\subsection{Inverse doping problem}\label{sec:numerik}

The inverse problem we are concerned with consists in determining the
doping profile function $C$ in \eqref{eq:equil-case} from measurements
of the linearized VC-map $\Sigma'_C(0)$ in \eqref{eq:def-sigma-prime-C}.
Notice that we can split the inverse problem in two parts: The first
step is to define the function $\gamma(x) := e^{V^0(x)}$, $x \in \Omega$,
and solve the parameter identification problem
\begin{equation} \label{eq:num-d2nB}
\begin{array}{r@{\ }c@{\ }l@{\ }l}
   {\rm div}\, (\mu_n \gamma \nabla \hat u) & = & 0 &  {\rm in}\ \Omega \\
   \hat u & = & - U(x) & {\rm on}\ \partial\Omega_D \\
   \nabla \hat u \cdot \nu & = & 0 & {\rm on}\ \partial\Omega_N
\end{array}
\hskip0.6cm
\begin{array}{r@{\ }c@{\ }l@{\ }l}
   {\rm div}\, (\mu_p \gamma^{-1} \nabla \hat v) & = & 0 &  {\rm in}\ \Omega \\
   \hat v & = & U(x) & {\rm on}\ \partial\Omega_D \\
   \nabla \hat v \cdot \nu & = & 0 & {\rm on}\ \partial\Omega_N
\end{array}
\end{equation}
for $\gamma$, from measurements of $[\Sigma'_C(0)](U) = \int_{\Gamma_1}
( \mu_n \gamma \hat{u}_\nu - \mu_p \gamma^{-1} \hat{v}_\nu ) \, ds$.
The second step consists in the determination of the doping profile in
$ C(x) = \gamma(x) - \gamma^{-1}(x) - \lambda^2 \Delta (\ln \gamma(x))$,
$x \in \Omega$.
Since the evaluation of $C$ from $\gamma$ can be explicitly performed
in a stable way, we shall focus on the problem of identifying the function
parameter $\gamma$ in (\ref{eq:num-d2nB}).

Summarizing, the inverse doping profile problem in the linearized stationary
bipolar model (close to equilibrium) for pointwise measurements of the current
density reduces to the identification of the parameter $\gamma$ in
(\ref{eq:num-d2nB}) from measurements of the DN map
$$  \Lambda_\gamma : \begin{array}[t]{rcl}
    H^{3/2}(\partial\Omega_D) & \to & \R \, . \\
    U & \mapsto & \int_{\Gamma_1}( \mu_n \gamma \hat{u}_\nu
                  - \mu_p \gamma^{-1} \hat{v}_\nu ) \, ds
    \end{array} $$
In the formulation of the inverse problem we shall take into account some
restrictions imposed by the practical experiments, namely
\begin{itemize}
\item[{\em i)}] The voltage profile $U \in H^{3/2}(\partial\Omega_D)$ must
satisfy $U |_{\Gamma_1} = 0$ (in practice, $U$ is chosen to be piecewise
constant on the contact $\Gamma_1$ and to vanish on $\Gamma_0$);
\item[{\em ii)}] The identification of $\gamma$ has to be performed from a
finite number $N \in \N$ of measurements, i.e. from the data
$\big\{ (U_i, \Lambda_\gamma(U_i)) \big\}_{i=0}^{N-1} \in
\big[ H^{3/2}(\Gamma_0) \times \R \big]^N$.
\end{itemize}
Therefore, we can write this particular inverse doping profile problem
in the abstract formulation of system \eqref{eq:inv-probl}, namely
\begin{equation} \label{eq:ip-abstract}
 F_i(\gamma) \ = \ \Lambda_\gamma(U_i) \ =: \ y_i \, , \ i=0, \dots, N-1 \, ,
\end{equation}
where $U_i$ are fixed voltage profiles chosen as above; 
$X := L^2(\Omega) \supset D(F_i) := \{ \gamma \in L^\infty(\Omega)$;
$0 < \gamma_m \le \gamma(x) \le \gamma_M$, a.e. in $\Omega \}$;
$Y := \R$.

To the best of our knowledge, assumptions (A1) - (A3) are not satisfied for
the Dirichlet-to-Neumann operator $\Lambda_\gamma$. Therefore, although the
operators $F_i$ are continuous \cite{BELM04}, the analytical convergence
results of the previous sections do not apply for system \eqref{eq:ip-abstract}.

In the following numerical experiment we assume that nine measurements are
available, i.e. $N = 9$, in \eqref{eq:ip-abstract}.
The domain $\Omega \subset \mathbb R^2$ is the unit square, and the boundary
parts are defined as follows
$$ \Gamma_1 \ := \  \{ (x,1) \, ;\ x \in (0,1) \} \, , \ \
   \Gamma_0 \ := \  \{ (x,0) \, ;\ x \in (0,1) \} \, , $$
$$ \partial\Omega_N \ := \ \{ (0,y) \, ;\ y \in (0,1) \} \cup 
   \{ (1,y) \, ;\ y \in (0,1) \} \, . $$
The fixed inputs $U_i$, are chosen to be piecewise constant functions
supported in $\Gamma_0$
$$  U_i(x) \ := \ \left\{ \begin{array}{rl}
      1, & |x - x_i| \le 2^{-4} \\
      0, & {\rm else} \end{array} \right. \,
    i=0,\dots, N-1 \, , $$
where the points $x_i$ are uniformly spaced in $[0,1]$.
The parameter $\gamma$ to be identified is shown in Figure~%
\ref{fig:exsol-source}~(a) (notice that $\gamma(x) \in [ 0, 10 ]$ a.e. in $\Omega$). In Figure~\ref{fig:exsol-source}~(b) a typical
voltage source $U_i$ (applied at $\Gamma_0$) and the corresponding
solution $\hat u$ of (\ref{eq:num-d2nB}) are shown.
In these two pictures, as well as in the forthcoming ones, $\Gamma_1$ is the
lower left edge and $\Gamma_0$ is the top right edge (the origin corresponds
to the upper right corner).

\begin{figure}[ht!]
\centerline{
\includegraphics[width=7.5cm]{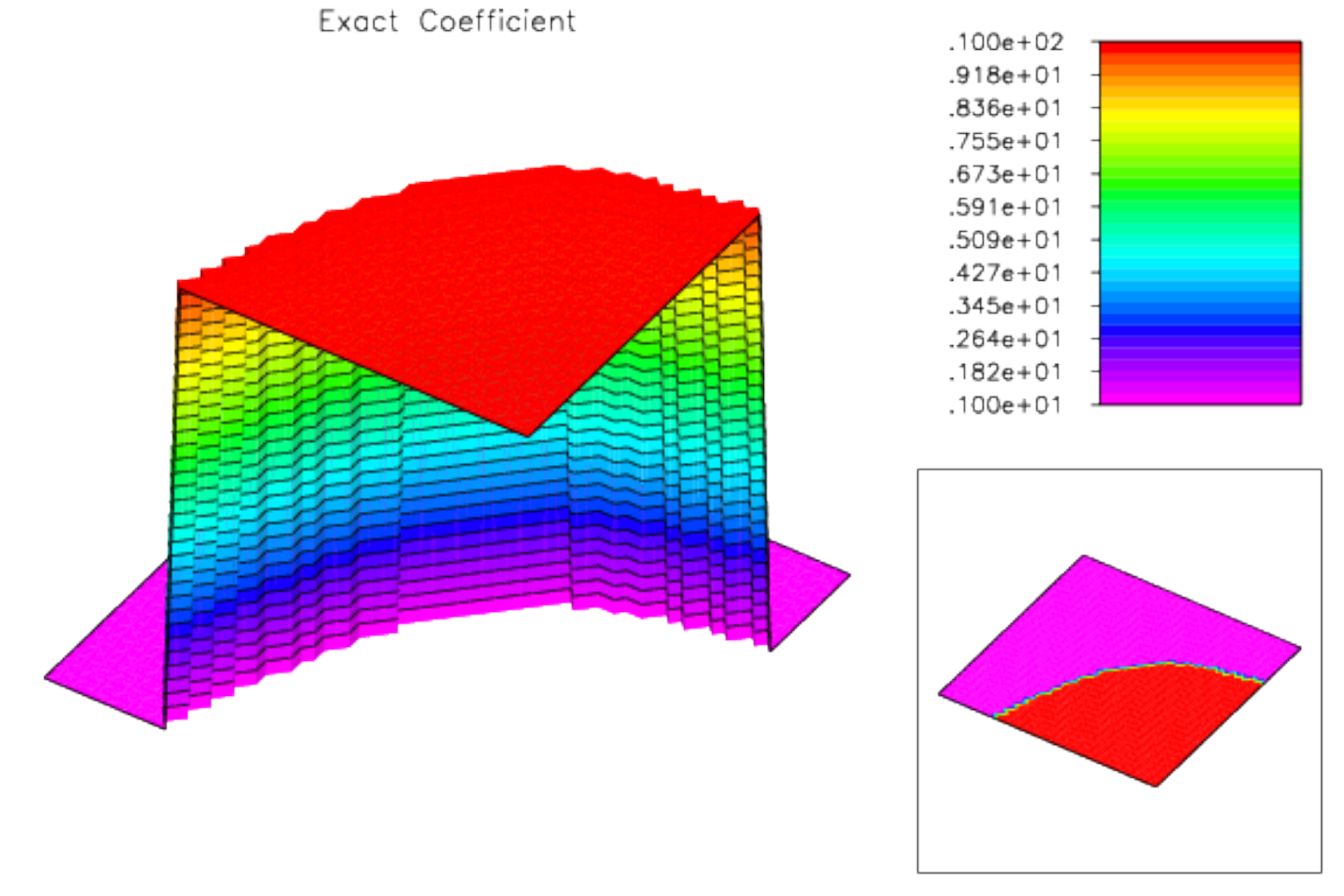} \hfil
\includegraphics[width=7.5cm]{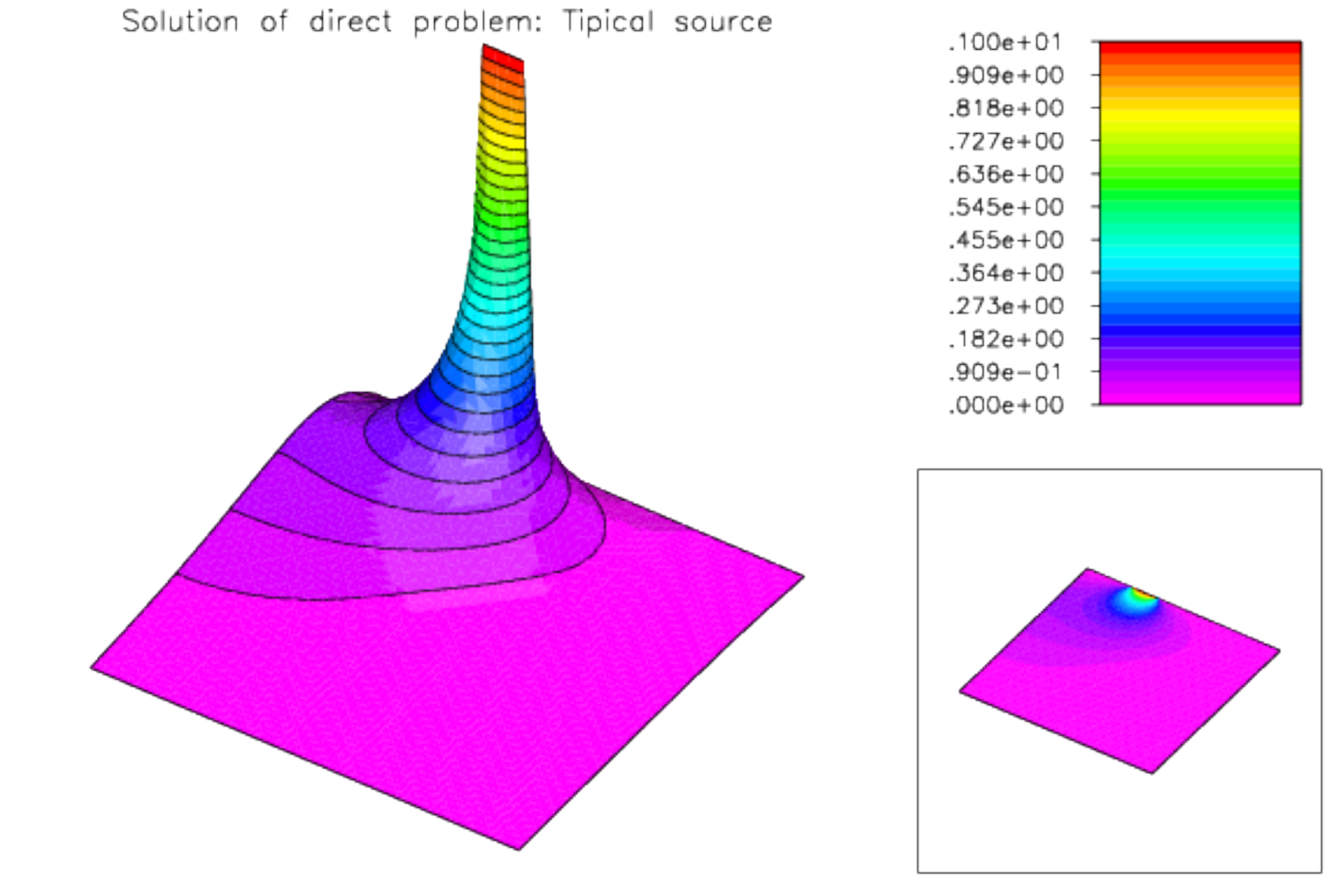} }
\centerline{\hfil (a) \hskip7cm (b) \hfil}
\caption{\small In picture (a) the parameter $\gamma$ to be identified is
shown. In picture (b) a typical voltage source $U_i$ (boundary condition)
and the corresponding solution $\hat u$ of (\ref{eq:num-d2nB}) are shown.}
\label{fig:exsol-source}
\end{figure}

For comparison purposes we implemented both the \textsc{l-LMK} and the 
\textsc{l-LK} iteration. The initial condition for both methods is presented
in Figure~\ref{fig:lLK-lLMK}~(c).
% BK 05jul09 vvvvv
The linear system in the \textsc{l-LMK} is solved inexactly by three CG steps, so the numerical effort for one step of the \textsc{l-LMK} is three times the one for one step of the \textsc{l-LK}.
In the computations it turned out that the performance of the \textsc{l-LMK} is not very sensitive to the value of $\alpha$.
% BK 05jul09 ^^^^^
The ``exact`` data $y_i$, $i=0,\dots,8$, were obtained by solving the direct
problems \eqref{eq:num-d2nB} using a finite element type method and adaptive
mesh refinement (approx 8000 elements). Artificially generated (random) noise
of 5\% was introduced to $y_i$ in order to generate the noisy data $y_i^\delta$
for the inverse problem.
In order to avoid inverse crimes, a coarser grid (with approx 2000 elements)
was used in the finite element method to implement the \textsc{l-LMK} and
\textsc{l-LK} iterations.

For both iterative methods the same stopping rule \eqref{def:discrep-lmk}
was used. We assumed exact knowledge of the noise level and chose $\tau = 2$.
In Figure~\ref{fig:lLK-lLMK}~(d) we plot, for each one of the iterations, the
number of non-loped inner steps in each cycle. For the \textsc{l-LMK} iteration
(solid red line) the stopping criterion is achieved after 24 cycles, while the
\textsc{l-LK} iteration (dashed blue line) is stopped after 205 cycles. In this
picture one also observes that the computational effort to perform the
\textsc{l-LMK} cycles decreases much faster than in the \textsc{l-LK} iteration.

The quality of the final result obtained with the \textsc{l-LMK} method can
be seen at Figure~\ref{fig:lLK-lLMK}~(a), where the iteration error for the
approximation obtained after 24 cycles is depicted.
In Figure~\ref{fig:lLK-lLMK}~(b) we present the iteration error for the
\textsc{l-LK} iteration after 205 cycles, when the stopping criterion is reached.

Since the same noisy data and the same stopping rule were used for both
iterations, the quality of the final results in Figures~\ref{fig:lLK-lLMK}~(a)
and~(b) is similar. However, the \textsc{l-LMK} iteration needed a
much smaller number of cycles to reach the stopping criterion than the \textsc{l-LK}
iteration (starting from the same initial guess). Moreover, the number of
actually performed inner steps per cycle is much smaller for the \textsc{l-LMK}
iteration.
All these observations lead us to conclude that the \textsc{l-LMK} iteration
is numerically much more efficient than the \textsc{l-LK} iteration.

\begin{figure}[ht!]
\centerline{\includegraphics[width=7.5cm]{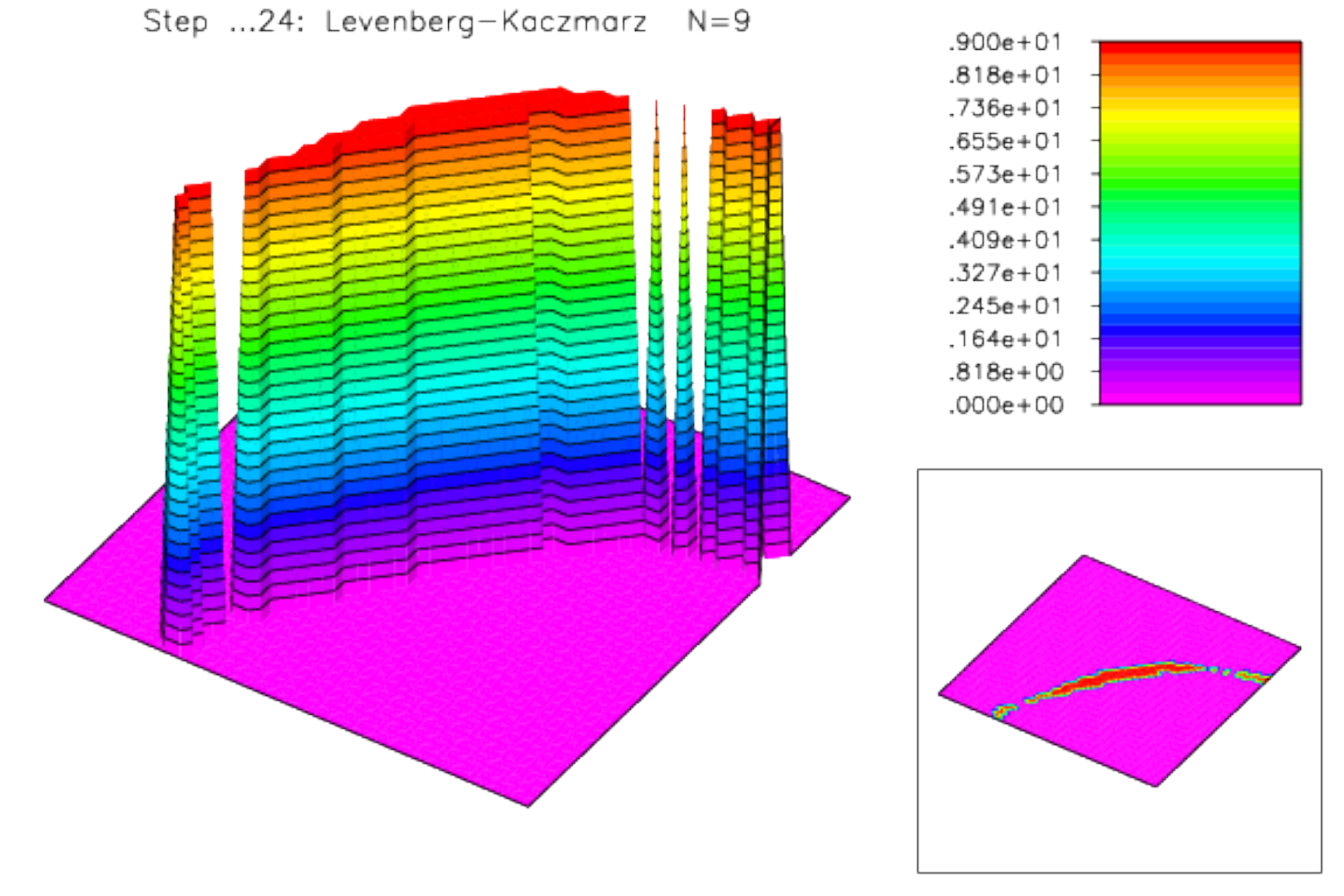} \hfil
            \includegraphics[width=7.5cm]{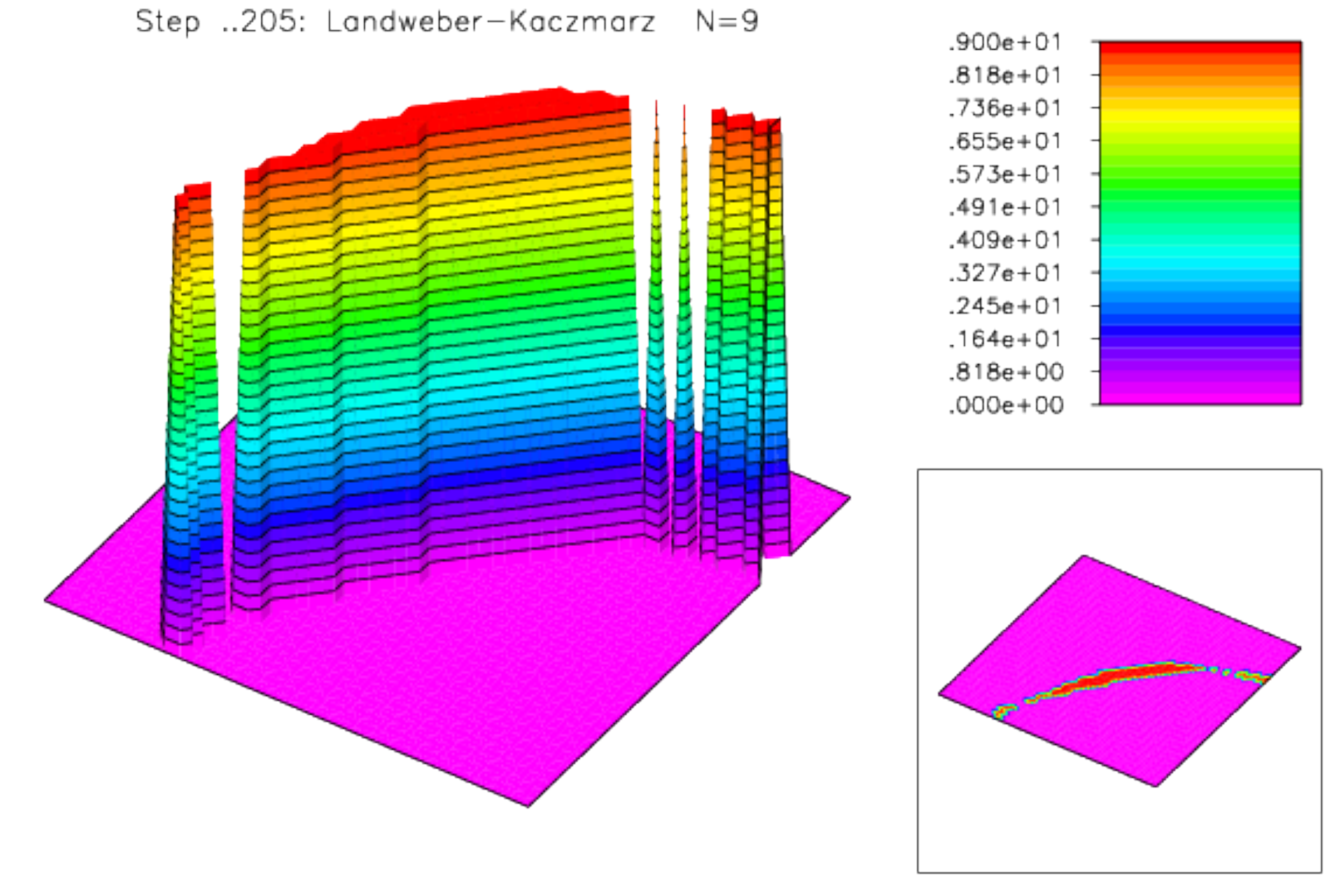} }
\centerline{\hfil (a) \hskip7cm  (b) \hfil}
\medskip
\centerline{\includegraphics[width=7.5cm]{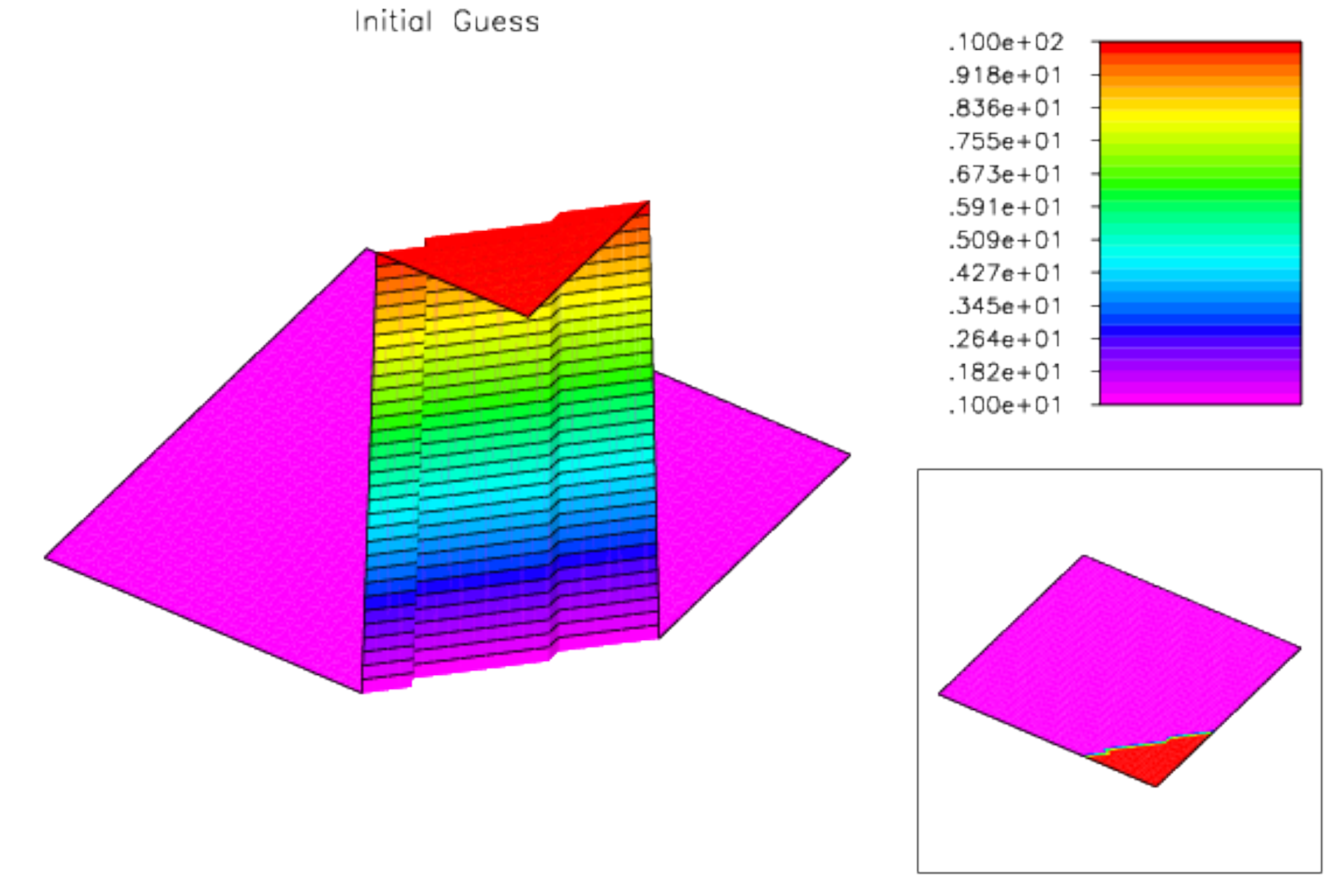} \hfil
            \includegraphics[width=7.5cm,height=5cm]{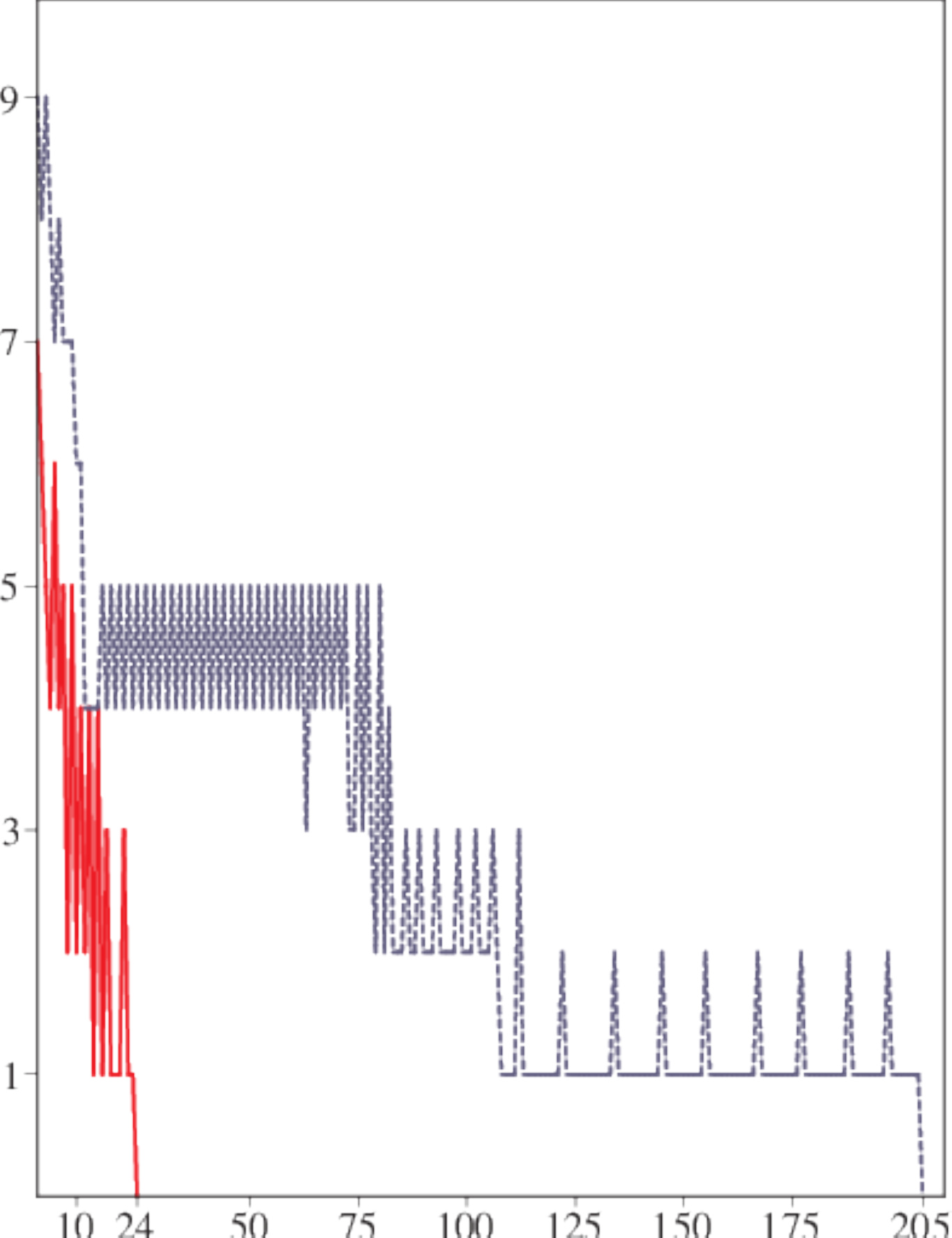} }
\centerline{\hfil (c) \hskip7cm  (d) \hfil}
\caption{\small Numerical experiment with noisy data:
On picture (a) the iterative error obtained with the \textsc{l-LMK} method
after 24 cycles. On picture (b) the iterative error obtained with the
\textsc{l-LK} method after 205 cycles.
On picture (c) the initial condition for both iterative methods.
On picture (d) the number of non-loped inner steps in each cycle for
\textsc{l-LMK} (solid red) and \textsc{l-LK} (dashed blue), respectively.} 
\label{fig:lLK-lLMK}
\end{figure}

% --------------------------------------------------------------------
\section{Conclusions} \label{sec:conclusion}

In this article we propose a new iterative method for inverse problems
of the form \eqref{eq:inv-probl}, namely the \textsc{l-LMK} iteration.
In the case of exact data this method reduces to the \textsc{LMK} iteration.

In the \textsc{l-LMK} iteration we omit an update of the \textsc{LMK}
iteration (within one cycle) if the corresponding $i$-th residual is
below some threshold. Consequently, the \textsc{l-LMK} method is not
stopped until all residuals are below the specified threshold. We provide
a complete convergence analysis for the \textsc{l-LMK} iteration, proving
that it is a convergent regularization method in the sense of
\cite{EngHanNeu96}.
Moreover, we provide a numerical experiment for a nonlinear inverse doping
problem and observe that the \textsc{l-LMK} iteration generates results
that are comparable with other Kaczmarz type iterations. The
specific example considered in Section~\ref{sec:numerik} indicates that
the \textsc{l-LMK} iteration is numerically more efficient than the
\textsc{l-LK} iteration.

% --------------------------------------------------------------------
\section*{Acknowledgments}

B.K. acknowledges support from the Stuttgart Research Center for Simulation
Technology and Cluster of Excellence ``Simulation Technology'' (SimTech). \\
The work of A.L. is supported by the Brazilian National Research Council CNPq,
grants 306020/2006--8, 474593/2007--0; and by the Alexander von Humboldt
Foundation AvH.

% --------------------------------------------------------------------
\bibliographystyle{amsplain}

\bibliography{kaczLM}

\end{document}